\numberwithin{equation}{section}
\newtheorem{theorem}{Theorem}[section]
\newtheorem{main theorem}{Main Theorem}
\newtheorem{problem}{Problem}
\newtheorem{lemma}[theorem]{Lemma}
\newtheorem{proposition}[theorem]{Proposition}
\newtheorem{corollary}[theorem]{Corollary}
\theoremstyle{definition}
\newtheorem{definition}[theorem]{Definition}
\newtheorem{remark}[theorem]{Remark}
\newtheorem{example}[theorem]{Example}
\theoremstyle{remark}
\newtheorem{acknowledgement}{Acknowledgement}
\newcommand{\Ass}{\operatorname{Ass}}
\newcommand{\Spec}{\operatorname{Spec}}
\newcommand{\Div}{\operatorname{Div}}
\newcommand{\Ht}{\operatorname{ht}}
\newcommand{\Supp}{\operatorname{Supp}}
\newcommand{\depth}{\operatorname{depth}}
\newcommand{\Proj}{\operatorname{Proj}}
\newcommand{\sep}{\operatorname{sep}}
\newcommand{\fm}{\frak{m}}
\newcommand{\fp}{\frak{p}}
\newcommand{\fq}{\frak{q}}
\newcommand{\fn}{\frak{n}}
\begin{document}
\title[On the semicontinuity problem of fibers and global $F$-regularity]
{On the semicontinuity problem of fibers and global $F$-regularity}
\author[K.Shimomoto]{Kazuma Shimomoto}

\address{Department of Mathematics, College of Humanities and Sciences, Nihon University, Setagaya-ku, Tokyo 156-8550, Japan}
\email{shimomotokazuma@gmail.com}
\thanks{The author is partially supported by Grant-in-Aid for Young Scientists (B) \# 25800028}

\subjclass{13A02, 13A35, 14A15.}

\keywords{Lifting problem, localization problem, $\mathbf{P}$-homomorphism, semicontinuity.}


\begin{abstract}
In this article, we discuss the semicontinuity problem of certain properties on fibers for a morphism of schemes. One aspect of this problem is local. Namely, we consider properties of schemes at the level of local rings, in which the main results are established by solving the lifting and localization problems for local rings. In particular, we obtain the localization theorems in the case of seminormal and $F$-rational rings, respectively. Another aspect of this problem is global, which is often related to the vanishing problem of certain higher direct image sheaves. As a test example, we consider the deformation of the global $F$-regularity.
\end{abstract}

\maketitle

\section {Introduction}

Let $f:X \to S$ be a morphism of locally Noetherian schemes and let $\mathbf{P}$ be a property on locally Noetherian schemes. Then we would like to investigate the following question. Let $U_f(\mathbf{P})$ denote the set of all $s \in S$ for which the base change scheme $X_s \times_{\Spec k(s)} \Spec L$ is $\mathbf{P}$, where $X_{s}:=f^{-1}(s)$ is the scheme-theoretic fiber and $k(s) \to L$ is any finite field extension. Then, is the set $U_f(\mathbf{P})$ open, closed, or constructible in the Zariski topology? In fact, this problem has been investigated in many interesting cases. More generally, let $\mathscr{F}$ be a coherent $\mathscr{O}_X$-module and let $\mathbf{P}$ be a property on coherent $\mathscr{O}_X$-modules. We set
$$
U_f^{\mathscr{F}}(\mathbf{P})=\{s \in S~|~\mathscr{F}_s:=i^{*}_{s}\mathscr{F}~\mbox{is}~\mathbf{P}~\mbox{for the inclusion map}~i_{s}:X_{s} \hookrightarrow X\}
$$ 
and then we can ask a similar question for $U_f^{\mathscr{F}}(\mathbf{P})$ as well. In these problems, the flatness condition is often crucial in order to apply results from commutative ring theory. It is interesting to know whether the main results in this paper are valid under the condition that a morphism is of finite flat dimension, since this case covers a local complete intersection morphism (see \cite{AvFo2} for related results). We also believe that they hold true for a pair $(X,D)$, where $D$ is a $\mathbb{Q}$-divisor of a variety $X$. The primary goal is to present a systematic treatment for the semicontinuity problem. 

Let $\psi:R \to S$ be a ring map of Noetherian rings and let $\mathbf{P}$ be a property on Noetherian rings. Then $\psi$ is said to be a $\mathbf{P}$-\textit{homomorphism}, if it is flat and $S \otimes_{R} L=S \otimes_R k(\fp) \otimes_{k(\fp)} L$ is $\mathbf{P}$ for any $\fp \in \Spec R$ and any finite field extension $k(\fp) \to L$ (see Definition \ref{P-map} below). A Noetherian ring $R$ is said to be a $\mathbf{P}$-\textit{ring}, if all of the formal fibers of $R$ are $\mathbf{P}$. The \textit{localization problem} is stated as follows \cite[(7.5)]{Gro2}.

\begin{problem}[Grothendieck]
Let $\psi:(R,\fm) \to (S,\fn)$ be a flat local map of Noetherian local rings. Assume that $R/\fm \to S \otimes_{R} R/\fm$ is a $\mathbf{P}$-homomorphism and $R$ is a $\mathbf{P}$-ring. Then is it true that $\psi$ is a $\mathbf{P}$-homomorphism?
\end{problem}

In fact, this problem has been investigated by many researchers extensively and positive answers have been obtained for the cases where, for example, $\mathbf{P}$=Cohen-Macaulay, Gorenstein, complete intersection, regular, reduced, normal, Serre's condition $(R_n)$, $(S_n)$, and so on. For a survey of these and other results, we refer the reader to \cite{AvFo}. The study of $\mathbf{P}$-rings and $\mathbf{P}$-homomorphisms started with a grand project on foundations of abstract algebraic geometry by Grothendieck in \cite{Gro2}. The \textit{lifting problem} is stated as follows.

\begin{problem}
For a Noetherian local ring $(R,\fm)$ together with a nonzero divisor $y \in \fm$, if $\mathbf{P}$ holds for $R/yR$, then does $\mathbf{P}$ lift to $R$?
\end{problem}

This problem has been established affirmatively in both trivial and nontrivial cases. For example, when $\mathbf{P}$=Cohen-Macaulay, this is trivial. Not obvious, but it is true that the lifting problem holds for $\mathbf{P}$=normal. Notably, the case of $\mathbf{P}$=seminormal was recently shown by Heitmann \cite{Hei}. 

Our main purpose in this article is to show that the properties of general fibers can be deduced from those of closed fibers for a morphism of schemes in both local and global cases. Most of our results in this article are derived from the following theorem (see Theorem \ref{Theorem1}).

\begin{main theorem}
Suppose that $f:X \to Y$ is a flat morphism of finite type of Noetherian excellent schemes and suppose the following conditions:

\begin{enumerate}
\item
$\mathbf{P}$ is defined at the level of excellent local rings;

\item
all excellent local rings have lifting property for $\mathbf{P}$;

\item
the image of every closed point of $X$ is closed in $Y$;

\item
$f_{k(s)}:X \times_Y \Spec k(s) \to \Spec k(s)$ is a $\mathbf{P}$-homomorphism for every closed point $s \in Y$.
\end{enumerate}

Then $f$ is a $\mathbf{P}$-homomorphism.
\end{main theorem}

The conditions $(1)$, $(2)$ and $(3)$ in the theorem are not so restrictive, while $(4)$ assures that $\mathbf{P}$ is stable under base change by a field extension on closed fibers. The condition $(3)$ is satisfied, when $f:X \to Y$ is a morphism between algebraic varieties defined over an algebraically closed field, or it is a proper morphism. The requirement of \textit{finite type} in the theorem is attributed to the use of Chevalley's theorem on constructible sets. The author believes that the \textit{finite type} condition is unnecessary in those cases considered in this article. We also note that a variant of the above theorem is proved by Grothendieck in \cite[Th\'eor\`eme 7.5.1]{Gro2}, in which the result is stated for a local map of complete local rings under the hypothesis that the residue field extension is finite. Marot \cite{Mar} also obtained some similar results in characteristic zero via resolution of singularities. Roughly speaking, the importance of the above theorem is expressed by the equivalence of the following conditions:
\begin{enumerate}
\item[$\bullet$]
$U_{f}(\mathbf{P})$ is a Zariski open set.

\item[$\bullet$]
$f$ is a $\mathbf{P}$-homomorphism and $U_{f}(\mathbf{P})$ is constructible.
\end{enumerate}

The set $U_{f}(\mathbf{P})$ is defined as previously. The structure of $U_{f}(\mathbf{P})$ was studied extensively by Grothendieck \cite{Gro2} under the assumption that the morphism is proper (see also \cite{GorWed} for a detailed list of this problem with citations from Grothendieck's EGA). In fact, most of the main results in this article hold for a \textit{going-up morphism} of schemes. This is a class of morphisms satisfying the going-up property for points of schemes. An important corollary is Corollary \ref{principle} which is established by regular alteration. It will be interesting to extend the main results of this article to the case of morphisms with finite flat dimension, using the methods developed in \cite{AvFo2}.

So far, we have considered only local conditions. As to global conditions, we will discuss the cases when $\mathbf{P}$=arithmetically Cohen-Macaulay, or globally $F$-regular. These notions are considered for projective varieties. In contrast to the local cases, we are not able to reduce the proof to the level of local rings. Nonetheless, the global property of a projective variety could be well understood by looking at the \textit{affine cone} with respect to a fixed embedding into a projective space. Among the main results, we mention the following result (see Corollary \ref{Finalcorollary}):

\begin{main theorem}[Criterion for global $F$-regularity]
Let $X$ be a connected normal projective variety over an $F$-finite field with $\dim X \ge 2$. Suppose that the following condition holds:
\begin{enumerate}
\item[$\bullet$]
There exists a generalized section ring $R=R(X,E)$ that is Gorenstein such that there is an injective $R$-module map $R \to R^{\frac{1}{q}};1 \mapsto c^{\frac{1}{q}}$, $R_c$ is strongly $F$-regular, where $c\in R$ is in no minimal prime of $R$, and its cokernel is a MCM module for some $q=p^e$.
\end{enumerate}
Then $X$ is globally $F$-regular.
\end{main theorem}

The notion of globally $F$-regular varieties was introduced and studied by Smith \cite{Smith}. The above result is obtained via use of local cohomology and canonical modules in the graded category. Throughout this article, an \textit{algebraic variety} over a field $k$ is always assumed to be a geometrically irreducible, reduced and separated scheme of finite type over $k$.

\section{Preliminaries}

Throughout this paper, we will assume that all rings and schemes are (locally) Noetherian. Let us recall the definition of $\mathbf{P}$-homomorphisms for rings and schemes, which was originally studied by Grothendieck \cite{Gro2}. Let $\fp \in \Spec R$ be a prime ideal. Denote by $k(\fp)$ the residue field of $R$ at $\fp$. For a ring homomorphism $\psi:R \to S$, we use the notation $\psi_{A}$ to denote $\psi \otimes_{R} A$ for an $R$-algebra $A$.

\begin{definition}
Let $\mathbf{P}$ be a ring theoretic property. 
\begin{enumerate}
\item
We say that  $\mathbf{P}$ is \textit{defined at the level of local rings}, if the following condition is satisfied: $\mathbf{P}$ holds for $R$ if and only if $\mathbf{P}$ holds for $R_{\fp}$ for all $\fp \in \Spec R$. 

\item
Let $X$ be a locally Noetherian scheme and assume that $\mathbf{P}$ is defined at the level of local rings. We say that $X$ is $\mathbf{P}$, if $\mathbf{P}$ holds for all of local rings $\mathscr{O}_{X,x}$.
\end{enumerate}
\end{definition}

Assume that $\mathbf{P}$ is defined at the level of local rings. Then it is true that a locally Noetherian scheme $X$ is $\mathbf{P}$ if and only if for every (or some) open covering $X=\bigcup_{\lambda \in \Lambda} U_{\lambda}$, where $U_{\lambda}=\Spec R_{\lambda}$, the ring $R_{\lambda}$ is $\mathbf{P}$.

\begin{definition}(\cite[(7.3.1)]{Gro2})
\label{P-map}
\begin{enumerate}
\item
Let $\psi:R \to S$ be a ring map of Noetherian rings. Then we say that $\psi$ is a $\mathbf{P}$-\textit{homomorphism}, if it is flat and if for any $\fp \in \Spec R$ and any finite field extension $k(\fp) \to L$, the Noetherian ring $S \otimes_R k(\fp) \otimes_{k(\fp)} L$ is $\mathbf{P}$.

\item
Let $f:X \to Y$ be a morphism of locally Noetherian schemes. Then we say that $f$ is a $\mathbf{P}$-\textit{homomorphism}, if $f$ is flat and if for any $x \in X$, there exist open affine subsets $U=\Spec S \subseteq X$ and $V=\Spec R \subseteq Y$ such that $x \in U$, $f(U) \subseteq V$ and the associated ring map $R \to S$ is a $\mathbf{P}$-homomorphism in the above sense.
\end{enumerate}
\end{definition}

We will mostly consider $\mathbf{P}$-homomorphisms when $\mathbf{P}$ is defined at the level of local rings in this article. If $\psi:R \to S$ is a $\mathbf{P}$-homomorphism and $R \to T$ is module-finite (not necessarily injective), the base change map $\psi _T:T \to S\otimes_R T$ is a $\mathbf{P}$-homomorphism, as the residue field extensions of $R \to T$ are finite.

Let $R$ be a Noetherian algebra over a field $K$. We say that a property $\mathbf{P}$ \textit{descends} under base change by a field extension of $K$, if $R \otimes_K L$ is $\mathbf{P}$ for some field extension $K \to L$, then $R$ is $\mathbf{P}$. Recall that a field extension $K \to L$ is a \textit{finitely generated separable extension}, if there is a finite set of elements $x_1,\ldots,x_d$ in $L$ such that $K(x_1,\ldots,x_d)$ is isomorphic to the field of fractions of the polynomial ring $K[x_1,\ldots,x_d]$, and $K(x_1,\ldots,x_d) \to L$ is finite separable in the usual sense. We refer the reader to \cite{Mat} for separable extensions of fields.

\begin{definition}
Let $\{K_{\lambda}\}_{\lambda \in \Lambda}$ be an inductive system consisting of a field $K$ and a family of field extensions $K \to K_{\lambda}$. We say that $\{K_{\lambda}\}_{\lambda \in \Lambda}$ is \textit{K-admissible}, if we are given $\lambda \le \lambda'$, then $K_{\lambda} \to K_{\lambda'}$ is a finite field extension, or a finitely generated separable extension.
\end{definition}

We note that $K_{\lambda} \to K_{\lambda'}$ in the above  definition can be a finite inseparable extension. The following lemma is useful when one checks the property $\mathbf{P}$ by taking only finite field extensions.

\begin{lemma}
\label{Lemma1}
Let $R$ be a finitely generated $K$-algebra for a field $K$. For any \textit{K-admissible} inductive system $\{K_{\lambda}\}_{\lambda \in \Lambda}$, assume that $\mathbf{P}$ is a property for which the following conditions hold:
\begin{enumerate}
\item[$\bullet$]
If $R \otimes_{K} K_{\lambda}$ is $\mathbf{P}$ for every $\lambda \in \Lambda$, then $R \otimes_{K} K'$ is $\mathbf{P}$ with $K'=\varinjlim K_{\lambda}$.

\item[$\bullet$]
Assume that $\mathbf{P}$ descends under base change by a field extension of $K$.
\end{enumerate}
Then the following conditions are equivalent:

\begin{enumerate}
\item
$R \otimes_{K} L$ is $\mathbf{P}$ for any finite field extension $L$ of $K$.

\item
$R \otimes_{K} L$ is $\mathbf{P}$ for any field extension $L$ of $K$.

\end{enumerate}
\end{lemma}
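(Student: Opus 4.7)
My plan is to prove the first assertion by reducing to a local check at each prime of $R\otimes_K K'$, and then to derive the equivalence in the second part by expressing an arbitrary field extension as a filtered colimit of the type handled by the first part.

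For the first assertion, $R\otimes_K K'$ is finitely generated over $K'$ and hence Noetherian. Since $\mathbf{P}$ is defined at the level of local rings, it suffices to verify $\mathbf{P}$ for each localization $(R\otimes_K K')_{\mathfrak{q}}$. Because $\mathfrak{q}$ is finitely generated, its generators involve only finitely many elements of $K'$ and hence lie in some $K_{\lambda_0}$, so $\mathfrak{q}$ is the extension of a prime $\mathfrak{q}_0\subset R\otimes_K K_{\lambda_0}$. Writing $\mathfrak{q}_{\lambda}$ for its extension in $R\otimes_K K_{\lambda}$ for $\lambda\ge\lambda_0$, localization commutes with filtered colimits and yields
$$(R\otimes_K K')_{\mathfrak{q}}=\varinjlim_{\lambda\ge\lambda_0}(R\otimes_K K_{\lambda})_{\mathfrak{q}_{\lambda}},$$
with each term on the right being $\mathbf{P}$ by hypothesis (ii). The delicate step is to argue that $\mathbf{P}$ is inherited by this filtered colimit of Noetherian local rings; here the hypothesis that every transition $K_{\lambda}\to K_{\lambda'}$ is finite or finitely generated separable is essential, because the induced map on the local rings is then faithfully flat of finite (respectively essentially finite) type---being finite flat in the first case and essentially smooth in the second---so that the Noetherian colimit inherits $\mathbf{P}$.

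For the equivalence, $(2)\Rightarrow(1)$ is trivial. To show $(1)\Rightarrow(2)$, given a field extension $K\to L$, I would write $L=\varinjlim_{\alpha}L_{\alpha}$ as a filtered colimit of finitely generated subextensions and refine it by inserting intermediate fields: for each inclusion $L_{\alpha}\subseteq L_{\beta}$, pick a transcendence basis $s_1,\ldots,s_e$ of $L_{\beta}$ over $L_{\alpha}$ and factor as $L_{\alpha}\hookrightarrow L_{\alpha}(s_1,\ldots,s_e)\hookrightarrow L_{\beta}$, in which the first map is finitely generated separable (with transcendence basis the $s_i$ themselves) and the second is a finite field extension. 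The refined system has colimit $L$ and satisfies condition (i). To apply the first part, it remains to show each $R\otimes_K K_{\lambda}$ is $\mathbf{P}$: when $K_{\lambda}/K$ is finite this is exactly hypothesis (1); when $K_{\lambda}/K$ is finitely generated separable, embed $K_{\lambda}$ in a suitable algebraically closed extension $\Omega$ which we present as a filtered colimit of finite $K$-extensions together with their transcendental enlargements, apply the first part to get that $R\otimes_K\Omega$ is $\mathbf{P}$, and invoke the descent hypothesis for the field extension $K_{\lambda}\hookrightarrow\Omega$ to conclude $R\otimes_K K_{\lambda}$ is $\mathbf{P}$. The first part then delivers $R\otimes_K L$ is $\mathbf{P}$.

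The principal obstacle is the colimit step in the first assertion: propagating $\mathbf{P}$ from each $(R\otimes_K K_{\lambda})_{\mathfrak{q}_{\lambda}}$ to the Noetherian local ring $(R\otimes_K K')_{\mathfrak{q}}$. The careful restriction of transitions to the finite and finitely generated separable cases is precisely what makes this tractable, as these are the extensions whose induced base changes are controlled enough for $\mathbf{P}$, a property defined at the level of local rings, to propagate to the limit.
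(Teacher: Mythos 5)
Your $(1)\Rightarrow(2)$ argument has a genuine circularity at the step meant to verify hypothesis (ii) for a transcendental, finitely generated subextension $K_\lambda\subseteq L$. You embed $K_\lambda$ into an algebraically closed $\Omega$ and propose to apply the first assertion to a filtered system presenting $\Omega$ as a colimit. But because $K_\lambda$ is transcendental over $K$, so is $\Omega$, and any filtered system of finitely generated subextensions presenting $\Omega$ must itself contain transcendental terms $K_\mu$; applying the first assertion there would require $R\otimes_K K_\mu$ to be $\mathbf{P}$ for those $K_\mu$, which is exactly the class of claims you set out to establish. The descent from $\Omega$ does not break the circle, because you have no independent way to get $R\otimes_K\Omega$ $\mathbf{P}$ in the first place. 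The paper avoids this by a different intermediate object: it first takes $K'$ to be a \emph{perfect algebraic} extension of $K$. Since $K'$ is the filtered union of its \emph{finite} subextensions, the first assertion applies with condition (ii) supplied directly by hypothesis $(1)$, giving $R\otimes_K K'$ is $\mathbf{P}$ without ever meeting a transcendental intermediate field. Only after this, via descent along $R\otimes_K L\to(R\otimes_K K')\otimes_{K'}K'L$, does the paper reduce to a perfect base field, where every finitely generated extension is separable, and then invoke the colimit argument for the remaining transcendental part. In short, the perfect-closure trick kills inseparability first using only finite stages, i.e.\ using only what $(1)$ gives you; your refinement by transcendence bases attacks the transcendental part first and hence needs exactly the input it is trying to produce.

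On the first assertion, your plan (reduce to a local ring $(R\otimes_K K')_{\mathfrak{q}}$, realize it as a filtered colimit of the local rings $(R\otimes_K K_\lambda)_{\mathfrak{q}_\lambda}$, and propagate $\mathbf{P}$) is a sensible outline, but you explicitly flag the propagation through the Noetherian colimit as ``the delicate step'' without carrying it out, so you have asserted the first part rather than proved it. The paper likewise treats this colimit passage without a detailed proof, so this is not a comparative defect, but it is the mathematical content of the first assertion and should not be left at a gesture toward faithful flatness. A small repair to your setup: take $\mathfrak{q}_\lambda$ to be the \emph{preimage} of $\mathfrak{q}$ under $R\otimes_K K_\lambda\to R\otimes_K K'$ rather than the extension of $\mathfrak{q}_0$, since the extension of a prime under a field base change need not remain prime.
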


\begin{proof}
For any field extension $L$ of $K$, $R \otimes_{K} L$ is a finitely generated $L$-algebra. It suffices to show that $(1)$ implies $(2)$. Let $K'$ be any perfect field that is algebraic over $K$. Then let us first show that $R \otimes_{K} K'$ is $\mathbf{P}$. Since $K'=\varinjlim_{K \subseteq k \subseteq K'} k$, where $k$ runs over all finite subextensions of $K'$, it follows that the inductive limit $R \otimes_{K} K'=\varinjlim_{K \subseteq k \subseteq K'} R \otimes_{K} k$ is $\mathbf{P}$. In order to show that $R \otimes_{K} L$ is $\mathbf{P}$, taking the extension 
$$
R \otimes_{K} L \to (R \otimes_{K} L) \otimes_{L} K'L \simeq (R \otimes_{K} K') \otimes_{K'} K'L, 
$$
it suffices to show that $(R \otimes_{K} K') \otimes_{K'} K'L$ is $\mathbf{P}$ by the descent property of $\mathbf{P}$, and hence we may assume that $K$ is a perfect field by replacing $K'$ with $K$ in the above. Then by recalling that any finitely generated field extension of a perfect field is separable and any field extension of a perfect field is obtained as the inductive limit of such subextensions, we find that $R \otimes_{K} L$ is $\mathbf{P}$ by the inductive limit argument.
\end{proof}

As usual, let $\mathbf{P}$ be a property on Noetherian rings.

\begin{definition}
Let $(R,\fm)$ be a Noetherian local ring. Then we say that $R$ has \textit{lifting property for} $\mathbf{P}$, if $R/yR$ is $\mathbf{P}$ for a nonzero divisor $y \in \fm$, then so is $R$. We say that $R$ has \textit{specialization property for} $\mathbf{P}$, if $R$ is $\mathbf{P}$, then so is $R/yR$ for any nonzero divisor $y \in \fm$. 
\end{definition}

In practice, there are many cases where both lifting and specialization properties are known to hold for local rings. If one tries to consider these problems for non-local rings, counterexamples usually do exist.

\section{Local properties on fibers and some consequences}

Let $R$ be a reduced Noetherian ring such that the normalization map $R \to \overline{R}$ is finite (such a ring is called a  \textit{Mori ring}). For example, this property is satisfied by excellent rings, which naturally appear in many applications. We will see that it is essential to reduce the proof of the main theorem to a simple case by a standard technique, then we resort to topological arguments. First of all, we need the following lemma.

\begin{lemma}
\label{Lemma2}
Assume that $(R,\fm)$ is a Noetherian local domain with $\dim R \ge 2$. Then for any $f \in R$, the localization $R[f^{-1}]$ is not a field.
\end{lemma}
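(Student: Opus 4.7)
The plan is to dispose of two trivial cases and then handle the remaining case by contradiction. If $f=0$, then $R[f^{-1}]$ is the zero ring, which is not a field. If $f$ is a unit, then $R[f^{-1}]=R$, which has Krull dimension $\ge 2$ and is therefore not a field. So I may assume $f$ is a nonzero nonunit and suppose for contradiction that $R[f^{-1}]$ is a field. Since $R$ is a domain, the inclusions $R\hookrightarrow R[f^{-1}]\hookrightarrow\operatorname{Frac}(R)$ force $R[f^{-1}]=\operatorname{Frac}(R)$, so every nonzero element of $R$ becomes a unit in $R[f^{-1}]$.

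Because $f\ne 0$ is a nonunit in the domain $R$, Krull's principal ideal theorem tells us that the minimal primes $\fp_1,\ldots,\fp_n$ of $(f)$ all have height exactly one. Since $\dim R\ge 2$, the maximal ideal $\fm$ has height at least two, hence $\fm\not\subseteq\fp_i$ for any $i$, and prime avoidance produces an element
$$
b\in\fm\setminus\bigcup_{i=1}^{n}\fp_i.
$$
Note that $b\ne 0$ since $0\in\fp_1$, and $b$ is a nonunit since $b\in\fm$.

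By the contradiction hypothesis, $1/b\in R[f^{-1}]$, so $f^N=bc$ for some $c\in R$ and some $N\ge 1$. Pick a minimal prime $\fq$ of $(b)$; applying Krull's principal ideal theorem to the nonzero nonunit $b$ gives $\Ht \fq=1$. Then $bc=f^N\in\fq$ forces $f\in\fq$, so $\fq$ is a height-one prime containing $f$ and therefore must coincide with some $\fp_j$. But then $b\in\fq=\fp_j$, contradicting the choice of $b$. This contradiction proves $R[f^{-1}]$ cannot be a field. The only genuine use of the hypothesis $\dim R\ge 2$ is in the prime-avoidance step that produces $b$; this is really the single idea of the argument, and it is not so much an obstacle as the heart of the proof, since the rest is routine bookkeeping with Krull's principal ideal theorem.
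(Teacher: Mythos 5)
Your proof is correct, but it takes a genuinely different route from the paper's. The paper first reduces to the case $\dim R = 2$, then observes that $\fm$ is the union of all height-one primes (every nonzero nonunit lies in a minimal prime of its own principal ideal, which has height one by Krull), deduces from prime avoidance that there must be infinitely many height-one primes, and finally notes that if $R[f^{-1}]$ were a field then every nonzero prime of $R$ would contain $(f)$ — impossible, since only finitely many height-one primes can lie over $(f)$. Your argument dispenses with the dimension reduction and the ``too many primes'' counting. Instead, you use prime avoidance once to produce a single witness $b \in \fm$ missing every minimal prime of $(f)$, then track divisibility: invertibility of $b$ in $R[f^{-1}]$ gives $b \mid f^N$ in $R$, and Krull's principal ideal theorem applied to a minimal prime of $(b)$ forces that prime to coincide with one of the avoided $\fp_j$, a contradiction. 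Both proofs ultimately rest on prime avoidance together with the principal ideal theorem; the paper's is a cardinality argument, while yours is more constructive and slightly more self-contained, since it avoids justifying the reduction to dimension two.
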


\begin{proof}
We are easily reduced to the case of $\dim R=2$, under which we keep the hypothesis. Then since the maximal ideal of $R$ is just the union of all height one primes, we find that there are infinitely many height one primes by the prime avoidance lemma. Then if $R[f^{-1}]$ is a field, it follows that the only prime ideal of $R$ which does not contain the principal ideal $(f)$ is $(0)$. But there are only finitely many height one primes containing the ideal $(f)$, we get a contradiction.
\end{proof}

We prove the first main theorem.

\begin{theorem}
\label{Theorem1}
Suppose that $f:X \to Y$ is a flat morphism of finite type of Noetherian excellent schemes and suppose the following conditions:

\begin{enumerate}
\item
$\mathbf{P}$ is defined at the level of excellent local rings;

\item
all excellent local rings have lifting property for $\mathbf{P}$;

\item
the image of every closed point of $X$ is closed in $Y$;

\item
$f_{k(s)}:X \times_Y \Spec k(s) \to \Spec k(s)$ is a $\mathbf{P}$-homomorphism for every closed point $s \in Y$.
\end{enumerate}

Then $f$ is a $\mathbf{P}$-homomorphism. 
\end{theorem}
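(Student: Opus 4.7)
The plan is to reduce the theorem to a ring-theoretic statement about a single flat local map of excellent local rings and then drive a Noetherian induction on the base, with the lifting property (2) as the engine.

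First I would use condition (3) to localize the question. Given any $x\in X$ with $y=f(x)$, the Noetherian scheme $X$ has a closed point $x'$ lying in $\overline{\{x\}}$; by (3), its image $s=f(x')$ is closed in $Y$. Passing to local rings, one obtains a flat, local, essentially-of-finite-type morphism $B:=\mathcal{O}_{Y,s}\to A:=\mathcal{O}_{X,x'}$ of excellent local rings, together with primes $\fp\subset B$, $\fq\subset A$ satisfying $\fq\cap B=\fp$, which represent $y$ and $x$ respectively. What must be shown is that the local ring $R:=A_\fq/\fp A_\fq$ of the fiber is $\mathbf{P}$, and remains so after every finite extension $k(\fp)\to L$. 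The geometric refinement will be handled uniformly by base-changing the whole local setup along such extensions and appealing to Lemma \ref{Lemma1}, so it suffices to establish the bare statement.

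Next I would argue by Noetherian induction on $\fp\in\Spec B$, or equivalently on $\dim B/\fp$. The base case $\fp=\fm_B$ is precisely condition (4). For the inductive step, assuming the result for every prime $\fp'\supsetneq\fp$, the lifting property (2) reduces the task to producing a non-zerodivisor $z\in R$ such that $R/zR$ is $\mathbf{P}$. The naive choice, lifting some $y\in\fm_B\setminus\fp$, is doomed: since $\fq\cap B=\fp$, every such $y$ is a unit in $R$. Hence $z$ must be drawn from $A$ directly. The strategy is to exploit the flat map $B/\fp\to A/\fp A$ over the excellent local domain $B/\fp$: when $\dim B/\fp\ge 2$, Lemma \ref{Lemma2} guarantees that inverting any single element of $B/\fp$ still leaves nontrivial primes, giving enough room to pick $z\in A$ so that every minimal prime of $(\fp,z)A$ contained in $\fq$ contracts to a prime $\fp'\supsetneq\fp$ of $B$. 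The induction hypothesis then applies at each such prime, and since $\mathbf{P}$ is local by (1), these combine to give that $R/zR$ is $\mathbf{P}$.

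The borderline case $\dim B/\fp=1$ I would treat separately, passing to the normalization of $B/\fp$, which is finite since $B$ is excellent; this reduces the situation to a DVR base, after which the lifting property applies directly to each DVR-localization. The main obstacle, in my view, is precisely the construction of $z$ in the paragraph above: one needs a single element of $A$ that is simultaneously a non-zerodivisor on $R$ and whose vanishing locus is scheme-theoretically contained in the union of the fibers $X_{\fp'}$ for $\fp'\supsetneq\fp$. The delicate point is that $(\fp,z)A$ is typically not a prime ideal and its minimal primes could in principle still contract to $\fp$; Lemma \ref{Lemma2} is exactly the tool that lets us deform away from the fiber at $\fp$ without falling back into it.
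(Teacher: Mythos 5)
Your reduction to a local statement, the use of (3) to find a closed point of $X$ over a closed point of $Y$, and the Noetherian induction on $\dim B/\fp$ are all in the same spirit as the paper, and your treatment of the $\dim B/\fp=1$ case via normalization is essentially the paper's. But the heart of your argument diverges from the paper and contains a gap.

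You propose to apply the lifting property (2) directly to the fiber local ring $R=A_\fq/\fp A_\fq$ by producing a single non-zerodivisor $z\in A$ (not from $B$) such that $R/zR$ is $\mathbf{P}$, with $\mathbf{P}$ for $R/zR$ coming from the induction hypothesis at primes $\fp'\supsetneq\fp$. The problem is that the induction hypothesis concerns fiber rings, i.e., rings of the form $A_P/\fp'A_P$ with $\fp'=P\cap B$, whereas the local rings of $R/zR$ are of the form $A_P/(\fp,z)A_P$. Since $z\in A$ is not in the image of $B$, the ideal $(\fp,z)A_P$ is in general neither equal to, nor comparable with, $\fp'A_P$, so there is no direct way to transfer $\mathbf{P}$ from $A_P/\fp'A_P$ to $A_P/(\fp,z)A_P$. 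A second issue: to invoke the lifting property you need $R/zR$ to be $\mathbf{P}$, which by (1) means $\mathbf{P}$ at \emph{every} prime of $R/zR$, not just at the minimal primes of $(\fp,z)A$; your argument only addresses minimal primes.

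The paper avoids both problems by not staying inside the fiber ring. After normalizing so that $R:=B/\fp$ (with your notation) becomes a semilocal normal domain, it picks a prime $Q\subset S$ lying over a \emph{nonzero regular} prime $\fp'$ of $R$ and containing the given $P$ over $(0)$. Because $R_{\fp'}$ is a regular local ring, $\fp'R_{\fp'}$ is generated by a regular sequence; by flatness this is a regular sequence on $S_Q$, and since $S_Q/\fp'S_Q$ is a local ring of the fiber at $\fp'$, which is $\mathbf{P}$ by the induction hypothesis, \emph{iterated} lifting along that regular sequence gives that $S_Q$ itself is $\mathbf{P}$. Then $S_P$, the local ring of the generic fiber, is a further localization of $S_Q$ and is therefore $\mathbf{P}$ by (1). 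So the lifting happens through the \emph{base} (from $S_Q/\fp'S_Q$ up to $S_Q$), and the descent to the generic fiber is by localization, not by an extra quotient inside the fiber. Your Lemma \ref{Lemma2} step correctly identifies why the locus of usable $Q$ is nonempty when $\dim\ge 2$, but you also need the restriction to the regular locus (hence the normalization in all dimensions, not just dimension one) to make the regular-sequence lifting go through. As written, the construction of $z$ and the deduction that $R/zR$ is $\mathbf{P}$ do not close, and I don't see a way to repair your version without essentially reverting to the paper's detour through $S_Q$.
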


\begin{proof}
Choose a point $\fp \in Y$. Let $V$ be the Zariski closure of $\fp$. Since $Y$ is a Noetherian topological space, $V$ contains a closed point $\fq \in Y$. Let $U=\Spec A \subseteq Y$ be an affine open subset such that $\fq \in U$. Then we have $\fp \in U$. So we may assume that both $X$ and $Y$ are affine by the condition $(1)$. Let $B$ be a finite type flat $A$-algebra such that $X=\Spec B$ and $Y=\Spec A$. Then we need to show that $A_{\fp} \to A_{\fp} \otimes_A B$ is a $\mathbf{P}$-homomorphism. Now let $R=A_{\fq}$ and $S=A_{\fq} \otimes_A B$. Then we are reduced to showing that the flat map $\psi:R \to S$ is a $\mathbf{P}$-homomorphism, where $S$ is a finite type flat $R$-algebra. Assume that this is false. Then since the closed fiber of $\psi$ is $\mathbf{P}$, using Noetherian induction, we may choose $\fp \in \Spec R$ such that $k(\fp') \to S \otimes_{R} k(\fp')$ is a $\mathbf{P}$-homomorphism for every $\fp' \in \Spec R$ with $\fp \subsetneq \fp'$, while $k(\fp) \to k(\fp) \otimes_{R} S$ is not. Let us replace the original $\psi$ with $R/\fp \to S/\fp S$.

Let $R'$ be the integral closure of $R$ in a finite field extension of the quotient field of $R$. Then the induced map $\psi_{R'}: R' \to S \otimes_{R} R'$ fulfills the condition (4), because $R \to R'$ is module-finite. Now replacing $R$ with $R'$, we may assume that $R$ is a semilocal normal domain. 

Under the assumptions as above, it is clear that it is sufficient to show that the generic fiber of $\psi:R \to S$ is $\mathbf{P}$. Since $R$ is an excellent domain, the regular locus $Y_{\mathrm{reg}} \subseteq \Spec R$ is a non-empty Zariski open subset. We claim that the local ring $S_{P}$ is $\mathbf{P}$ for $P \in \Spec S$ such that $\fp:=R \cap P$ is nonzero and $\fp \in Y_{\mathrm{reg}}$. In fact, since the maximal ideal $\fp$ of $R_{\fp}$ is generated by a regular sequence, and since $S_{P}/\fp S_{P}$ is $\mathbf{P}$ by hypothesis, it follows that $S_{P}$ is $\mathbf{P}$ by the condition $(2)$.

Now pick any $P \in \Spec S$ with $R \cap P=(0)$. To prove the theorem, it suffices to find a prime $Q \in \Spec S$ such that 
\begin{equation}
\label{crucialpoint}
R \cap Q \in Y_{\mathrm{reg}},~P \subseteq Q~\mbox{and}~R \cap Q \ne (0).
\end{equation}

Let $V$ denote the Zariski closure of the point $P \in \Spec S$. We claim that the image $Z:=f(V)$ for the map $f:\Spec S \to \Spec R$ contains a Zariski open subset. Indeed, Chevalley's theorem asserts that $Z$ is constructible and contains the generic point $(0) \in \Spec R$. Hence $Z$ contains a dense open subset since $\Spec R$ is integral. We divide the rest of the proof into two cases. 

Assume first $\dim R=1$, in which case $R$ is a semilocal Dedekind domain. Pick any $P \in \Spec S$ such that $R \cap P=(0)$. Since the pull-back of every maximal ideal of $S$ is maximal in $R$ by the condition $(3)$, there exists $Q \in \Spec S$ such that $P \subseteq Q$ and $\fq=R \cap Q$ is a maximal ideal of $R$. Applying the condition $(2)$ again to the induced map $\psi_Q:R_{\fq} \to S_Q$, it follows that the localization  $S_P$ of $S_Q$ is $\mathbf{P}$. 
   
Assume next $\dim R \ge 2$. The intersection $Z':=Y_{\mathrm{reg}} \cap Z$ is a constructible subset of $Y$ and $(0) \in Z'$, where $(0)$ is the generic point of $Y$. This implies that $Z'$ contains a dense open subset of $Y$. By shrinking $Z'$, we may assume that $Z'$ is an open subset of $Y$ and we claim that $(0) \subsetneq Z'$. Indeed, if $(0)=Z'$, the generic point $(0)$ must be an open subset of $Y$. This implies that there exists a nonzero element $f \in R$ such that the localization $R[f^{-1}]$ is a field. However, this is impossible, due to Lemma \ref{Lemma2}. 

By these observations, we can find a prime ideal $Q \in \Spec S$ satisfying $(\ref{crucialpoint})$. Thus, we have shown the theorem in all cases.
\end{proof}

\begin{remark}
This theorem is useful when one wants to deduce certain properties of general fibers from those of closed fibers. We state some corollaries as immediate consequences of the main theorem. For the definition of seminormal rings and their basic properties, we follow \cite{GrTr}, where one finds interesting applications to singularities on schemes and analytic varieties.
\end{remark}

\begin{corollary}[Localization theorem I]
Let $\phi:R \to S$ be a flat map of finite type of excellent rings. Assume that $P \cap R$ is a maximal ideal for all maximal ideals $P$ of $S$ and every closed fiber of $\phi$ is geometrically seminormal over the corresponding residue field of $R$. Then $\phi$ is a seminormal homomorphism.
\end{corollary}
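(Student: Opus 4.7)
The plan is to apply Theorem \ref{Theorem1} directly to the morphism $f:\Spec S \to \Spec R$ with $\mathbf{P}$ equal to the property of being seminormal, and the work reduces to checking the four hypotheses of that theorem one by one. There is nothing new to prove beyond citing known results and translating the assumptions of the corollary into the language of Theorem \ref{Theorem1}.

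I would verify the conditions as follows. For condition (1), seminormality is a local property of Noetherian rings in the sense that $R$ is seminormal if and only if $R_{\fp}$ is seminormal for every $\fp \in \Spec R$; this is part of the basic theory developed in \cite{GrTr}, so in particular it is defined at the level of excellent local rings. For condition (2), the lifting property for seminormality on Noetherian local rings is precisely the theorem of Heitmann \cite{Hei}; since excellent local rings form a subclass, the lifting property holds there as well. Condition (3) is immediate from the standing assumption that the contraction $P \cap R$ is a maximal ideal of $R$ for every maximal ideal $P$ of $S$: on affine schemes, closed points correspond to maximal ideals, so the image of every closed point of $\Spec S$ is closed in $\Spec R$. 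For condition (4), the hypothesis that every closed fiber of $\phi$ is geometrically seminormal over its residue field is exactly the statement that $f_{k(s)}:\Spec(S\otimes_R k(s)) \to \Spec k(s)$ is a seminormal homomorphism in the sense of the paper's definition, since the base changes of the fiber by finite field extensions of $k(s)$ are seminormal.

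With all four hypotheses satisfied, Theorem \ref{Theorem1} yields that $f$ is a seminormal homomorphism, i.e., $\phi:R \to S$ is a seminormal homomorphism, which is the conclusion. The only mildly delicate point is in the interpretation of condition (4): strictly speaking, Theorem \ref{Theorem1} only demands the fiber to be seminormal after finite field extensions, whereas the hypothesis of the corollary provides seminormality after arbitrary field extensions; but by Lemma \ref{Lemma1}, provided seminormality descends under field base change, these two formulations coincide for a finitely generated algebra over a field. Thus the main obstacle, namely the passage from closed fibers to all fibers, has already been handled once and for all inside Theorem \ref{Theorem1}, and the corollary reduces to a bookkeeping verification of its input.
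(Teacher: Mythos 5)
Your proof is correct and takes essentially the same approach as the paper: invoke Theorem~\ref{Theorem1} with $\mathbf{P}$ equal to seminormality, citing Heitmann~\cite{Hei} for the lifting property and the local nature of seminormality, and reading the corollary's hypotheses on maximal ideal contractions and geometrically seminormal closed fibers as conditions (3) and (4). The paper's own proof is a one-line version of exactly this; your careful verification of the four hypotheses, and the remark via Lemma~\ref{Lemma1} that ``geometrically seminormal'' matches the finite-extension formulation, are just spelled-out versions of what the paper calls ``obvious.''
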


\begin{proof}
The lifting property is due to Heitmann \cite{Hei} and seminormality is a local property. The corollary follows from Theorem \ref{Theorem1}.
\end{proof}

\begin{definition}
We say that a local ring $R$ (or a scheme $X$) is \textit{standard}, if $R$ (or $X$) is essentially of finite type over a field. 
\end{definition}

All standard schemes are excellent. We use this terminology only in this article.

\begin{corollary}
Let $f:X \to Y$ and $g:Z \to Y$ be morphisms of algebraic varieties over an algebraically closed field $k$ such that $f:X \to Y$ is flat, every closed fiber of $f$ is $\mathbf{P}$, and $Z$ is $\mathbf{P}$. Assume that:

\begin{enumerate}
\item
all standard local rings have lifting property for $\mathbf{P}$;
 
\item
$\mathbf{P}$ is defined at the level of standard local rings;

\item
$\mathbf{P}$ has the property that if $(R,\fm) \to (S,\fn)$ is a flat local map of standard local rings such that $R$ and all fibers are $\mathbf{P}$, then so is $S$.
\end{enumerate}

Then $X \times_{Y} Z$ is $\mathbf{P}$.
\end{corollary}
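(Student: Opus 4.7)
The plan is to apply Theorem~\ref{Theorem1} directly to the base-change projection $p\colon X \times_Y Z \to Z$ rather than to $f$ itself, and then to use hypothesis~(3) of the corollary to propagate $\mathbf{P}$ from $Z$ to the total space pointwise via flat local maps of stalks. We do not attempt to base-change the conclusion ``$f$ is a $\mathbf{P}$-homomorphism'' along $g$, because a $\mathbf{P}$-homomorphism is defined by a condition on \emph{finite} residue field extensions only, while base change along an arbitrary morphism of finite type can produce infinitely generated residue field extensions; re-running the Main Theorem for $p$ itself circumvents this difficulty.

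I would first verify the hypotheses of Theorem~\ref{Theorem1} for $p$. Flatness and finite-typeness of $p$ are inherited from those of $f$ by base change, and both $X \times_Y Z$ and $Z$ are of finite type over $k$, hence excellent. Hypotheses~(1) and (2) of the Main Theorem coincide with hypotheses~(2) and (1) of the corollary, applied to the standard local rings that arise as stalks of these varieties. For hypothesis~(3), any morphism of finite type between Jacobson schemes sends closed points to closed points, and varieties over $k$ are Jacobson; in particular, closed points of $X \times_Y Z$ map to closed points of $Z$, and closed points of $Z$ in turn map to closed points of $Y$. For hypothesis~(4), let $z \in Z$ be closed; then $k(z) = k$, and likewise $k(g(z)) = k$ for the closed point $g(z) \in Y$, so $p^{-1}(z) \cong X_{g(z)}$ as schemes over $k$, which is $\mathbf{P}$ by assumption. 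Since every finite extension of the algebraically closed field $k$ is trivial, $p_{k(z)}$ is automatically a $\mathbf{P}$-homomorphism. Theorem~\ref{Theorem1} then yields that $p$ is a $\mathbf{P}$-homomorphism.

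To conclude, take any $w \in X \times_Y Z$ and put $z = p(w)$. The induced ring map $\phi\colon \mathcal{O}_{Z,z} \to \mathcal{O}_{X \times_Y Z, w}$ is flat local between standard local rings; its source is $\mathbf{P}$ because $Z$ is $\mathbf{P}$ and $\mathbf{P}$ is local. For any prime $\fq \subset \mathcal{O}_{Z,z}$, corresponding to a generalization $z' \leadsto z$ in $Z$, the fiber ring $\mathcal{O}_{X \times_Y Z, w} \otimes_{\mathcal{O}_{Z,z}} k(z')$ is Noetherian, and each of its localizations at a prime is a local ring of the geometric fiber $p^{-1}(z')$; since $p$ is a $\mathbf{P}$-homomorphism, $p^{-1}(z')$ is $\mathbf{P}$, so every such local ring is $\mathbf{P}$, and by locality the fiber of $\phi$ at $\fq$ is $\mathbf{P}$. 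Hypothesis~(3) of the corollary then applies and gives that $\mathcal{O}_{X \times_Y Z, w}$ is $\mathbf{P}$. Since this holds at every point of $X \times_Y Z$ and $\mathbf{P}$ is defined at the level of local rings, $X \times_Y Z$ is $\mathbf{P}$.

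The most delicate step to monitor is the verification of hypothesis~(4) for $p$: it is precisely the algebraic closedness of $k$ which collapses each residue field at a closed point to $k$ itself, and that is what converts the plain $\mathbf{P}$ condition on the closed fibers of $f$ into the geometric $\mathbf{P}$ condition demanded of the closed fibers of $p$, allowing the argument to proceed without any separate descent hypothesis for $\mathbf{P}$ under field extensions.
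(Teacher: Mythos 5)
Your proof is correct and takes essentially the same route as the paper: apply Theorem~\ref{Theorem1} to the base-change projection $X \times_Y Z \to Z$ (using algebraic closedness of $k$ to see that closed points go to closed points and that closed fibers of $p$ are identified with closed fibers of $f$, so condition (4) holds automatically), and then invoke hypothesis (3) of the corollary on the induced flat local map of stalks. You simply spell out the verifications that the paper leaves terse.
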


\begin{proof}
Since $k$ is an algebraically closed field, $X \times_{Y} Z$ is a $k$-variety and the base change morphism $f \times \mathrm{id}_{Z}:X \times_{Y} Z \to Z$ is flat, which maps closed points to closed points by Hilbert's Nullstellensatz. The fiber of $X \times_{Y} Z \to Z$ over a closed point $z \in Z$ is $X \times_{Y} Z \times_{Z} \Spec k(z) \simeq X \times_{Y} \Spec k$. So it follows from Theorem \ref{Theorem1} that $X \times_{Y} Z \to Z$ is a $\mathbf{P}$-homomorphism. Replacing $X \times_{Y} Z \to Z$ with the induced local map of local rings of schemes, we use the condition $(3)$ to conclude that $X \times_{Y} Z$ is $\mathbf{P}$. 
\end{proof}

\begin{remark}
\label{rmk1}
Let $\mathbf{P}$ be one of the following properties: Cohen-Macaulay, Gorenstein, locally complete intersection, regular, normal, or seminormal. Then the above corollary holds for these cases. The lifting property in the above cases, except for the case $\mathbf{P}$=seminormal, which is due to Heitmann, is almost immediate from definitions. For the seminormality for rings and its geometric side, we refer the reader to \cite{GrTr} and \cite{Vit}. Note that seminormality also appears in the compactification problem of moduli spaces of higher dimensional varieties \cite{Kol}.
\end{remark}

For a scheme map $f:X \to Y$, we use the following notation. We denote by $U_{f}(\mathbf{P})$ the set of all scheme-theoretic points of $Y$ such that $y \in U_{f}(\mathbf{P})$ $\iff$ $X_y \times_{\Spec k(y)} \Spec L$ is $\mathbf{P}$ for $X_y:=f^{-1}(y)$ and any finite field extension $k(y) \to L$.

\begin{corollary}[Semicontinuity principle]
\label{principle}
Suppose that $f:X \to Y$ is a proper flat morphism of integral schemes of finite type over a field $k$ and suppose the following conditions:

\begin{enumerate}
\item
all standard local rings have both lifting and specialization properties for $\mathbf{P}$;

\item
$\mathbf{P}$ is defined at the level of standard local rings;

\item
the $\mathbf{P}$-locus in every standard scheme is constructible;

\item
if $R$ is a standard local ring over a field $K$, then $R$ is $\mathbf{P}$ $\iff$ $R \otimes_{K} L$ is $\mathbf{P}$ for every finitely generated field extension $K \to L$.
\end{enumerate}

Then $U_{f}(\mathbf{P})$ is Zariski open in $Y$.
\end{corollary}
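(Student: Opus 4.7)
The plan is to establish two properties of the set $U_{f}(\mathbf{P}) \subseteq Y$ and combine them: that it is stable under generalization, and that it is constructible. A constructible subset of a Noetherian scheme that is closed under generalization is automatically Zariski open, so this suffices.

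Stability under generalization will follow directly from Theorem \ref{Theorem1}. I would fix $y_{0} \in U_{f}(\mathbf{P})$ and an arbitrary generalization $y_{1}$ of $y_{0}$; base changing $f$ along $\Spec \mathcal{O}_{Y,y_{0}} \hookrightarrow Y$ produces a flat, finite-type morphism $f' \colon X' \to \Spec \mathcal{O}_{Y,y_{0}}$ of excellent schemes. Since $f$ is proper, so is $f'$, forcing the image of any closed point of $X'$ to be the unique closed point $y_{0}$; moreover the closed fiber of $f'$ is $X_{y_{0}}$, which is geometrically $\mathbf{P}$ by the hypothesis $y_{0} \in U_{f}(\mathbf{P})$. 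All hypotheses of Theorem \ref{Theorem1} are thus satisfied (hypotheses (1)--(2) coming from conditions (2)--(1) of the corollary via the fact that the local rings in play are standard, hence excellent), so $f'$ is a $\mathbf{P}$-homomorphism and in particular the fiber over $y_{1}$ is geometrically $\mathbf{P}$, which says $y_{1} \in U_{f}(\mathbf{P})$.

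Constructibility is the harder half, because condition (3) controls the locus of points where an \emph{absolute} local ring is $\mathbf{P}$, whereas $U_{f}(\mathbf{P})$ records a \emph{fiberwise} condition, and the local rings $\mathcal{O}_{Y,y}$ of the base are not assumed regular. To bridge this gap I would invoke de Jong's regular alteration theorem --- the tool flagged in the introduction --- producing a proper, surjective, generically finite morphism $\pi \colon \tilde{Y} \to Y$ with $\tilde{Y}$ a regular integral scheme of finite type over $k$, and form the base change $\tilde{f} \colon \tilde{X} := X \times_{Y} \tilde{Y} \to \tilde{Y}$, which remains flat, proper and of finite type. For any $\tilde{x} \in \tilde{X}$ with $\tilde{y} := \tilde{f}(\tilde{x})$, the maximal ideal of $\mathcal{O}_{\tilde{Y},\tilde{y}}$ is generated by a regular sequence which, by flatness, remains regular in $\mathcal{O}_{\tilde{X},\tilde{x}}$; combining the lifting and specialization properties of condition (1) then gives
\[
\mathcal{O}_{\tilde{X},\tilde{x}}\ \text{is}\ \mathbf{P} \iff \mathcal{O}_{\tilde{X}_{\tilde{y}},\tilde{x}}\ \text{is}\ \mathbf{P}.
\]
Hence the absolute $\mathbf{P}$-locus $\tilde{V} \subseteq \tilde{X}$, which is constructible by (3), coincides with the fiberwise $\mathbf{P}$-locus; Chevalley's theorem applied to the finite-type morphism $\tilde{f}$ then makes $\tilde{f}(\tilde{X} \setminus \tilde{V})$ constructible in $\tilde{Y}$, and conditions (2) and (4) let us identify its complement with $U_{\tilde{f}}(\mathbf{P})$. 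Since the residue field extensions along $\pi$ are finitely generated, condition (4) yields $\pi^{-1}(U_{f}(\mathbf{P})) = U_{\tilde{f}}(\mathbf{P})$, and one last application of Chevalley to the proper surjective $\pi$ descends constructibility back to $Y$.

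The main obstacle is the constructibility step: it requires both the input of de Jong's alteration and a careful comparison of absolute, fiberwise and geometric versions of $\mathbf{P}$ along the alteration. Once these comparisons are in hand, stability under generalization is routine from Theorem \ref{Theorem1}, and the openness of $U_{f}(\mathbf{P})$ drops out of the standard topological lemma on constructible subsets of Noetherian schemes.
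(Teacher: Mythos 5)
Your proposal is correct and takes essentially the same route as the paper. The paper organizes the argument slightly differently — it first proves the statement when $Y$ is regular (establishing both stability under generization and constructibility there), and then reduces the general case to the regular one via de Jong's alteration and condition (4) — whereas you separate the two halves (generization directly from Theorem \ref{Theorem1}, constructibility via the alteration), but the ingredients (Theorem \ref{Theorem1}, the regular-sequence lifting/specialization argument on a regular base, Chevalley's theorem, regular alterations, and condition (4) to transfer $\mathbf{P}$ along the finitely generated residue field extensions of $\pi$) are identical in both. One small point worth making explicit: the step $\pi^{-1}(U_{f}(\mathbf{P})) = U_{\tilde f}(\mathbf{P})$ and the comparison between the absolute and fiberwise $\mathbf{P}$-loci both rely on reading condition (4) as the statement that, for \emph{each} finitely generated extension $K \to L$, $R$ is $\mathbf{P}$ if and only if $R \otimes_K L$ is, i.e.\ as including descent along a fixed finitely generated extension; this is how the paper uses it as well.
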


\begin{proof}
First, we prove the corollary for the case when $Y$ is a regular scheme. What we need to show is that, if $y \in Y$ satisfies the property that $X_{y}=X \times_{Y} \Spec k(y)$ is $\mathbf{P}$, then there exists an open neighborhood $y \in U$ such that every fiber over $U$ is $\mathbf{P}$ and use the condition $(4)$. 

Recall that a subset of a Noetherian scheme is open if and only if it is constructible and stable under generalization. The base change map $X \times_Y \Spec \mathscr{O}_{Y,y} \to \Spec \mathscr{O}_{Y,y}$ sends closed points to closed points, since it is proper. Thanks to $(1)$ and $(2)$, $X \times_Y \Spec \mathscr{O}_{Y,y} \to \Spec \mathscr{O}_{Y,y}$ is a $\mathbf{P}$-homomorphism in view of Theorem \ref{Theorem1}. Then it suffices to show that $U_{f}(\mathbf{P})$ is constructible. Let $f(x)=y$ for $y \in Y$ as above. Then the closed fiber of the local map $\mathscr{O}_{Y,y} \to \mathscr{O}_{X,x}$ is $\mathbf{P}$ and $\mathscr{O}_{Y,y}$ is regular by hypothesis, so we can lift $\mathbf{P}$ from the fiber $\mathscr{O}_{X,x}/\fm_{\mathscr{O}_{Y,y}}\mathscr{O}_{X,x}$ to $\mathscr{O}_{X,x}$ by the lifting property of $\mathbf{P}$. Let $V \subseteq X$ be the maximal $\mathbf{P}$-locus. Then $V$ contains the fiber $X_{y}$. By using $(3)$, $V$ is constructible. Let $W$ denote the complement of $V$ in $X$. Let $U$ be the complement of $f(W)$ in $Y$. We find that $U$ is the maximal subset of $Y$ such that the fiber over every point of $U$ is $\mathbf{P}$, as $\mathbf{P}$ has both lifting and specialization properties. By Chevalley's theorem, it follows that $U$ is constructible. Hence $U$ is non-empty and open in the Zariski topology.

Next assume that $Y$ is any integral scheme which is of finite type over $k$. Then by the existence of regular alterations \cite{deJo}, there exist a regular integral scheme $Y'$ together with a generically finite, proper map $g:Y' \to Y$ and the fiber product diagram:
$$
\begin{CD} 
X \times_{Y} Y' @>>> Y' \\
@VVV @VVgV \\
X @>f>> Y \\
\end{CD}
$$
To prove the general case, let $y' \in Y'$ and let $y=g(y')$. Then we have $(X \times_{Y} Y') \times_{Y'} \Spec k(y') \simeq X \times_{Y} \Spec k(y')$, where $k(y) \to k(y')$ is a finitely generated extension. Since the fiber $(X \times_{Y} Y')_{y'}$ is the base change of $X_y$ by a finitely generated field extension, the condition (4) implies that $X_y$ is $\mathbf{P} \iff (X \times_{Y} Y')_{y'}$ is $\mathbf{P}$. As $Y'$ is regular, we can find an open subset $U' \subseteq Y'$ as previously, and $U:=g(U')$ is constructible. Then apply Theorem~\ref{Theorem1} to conclude that $U$ is open, as desired.
\end{proof}

\begin{remark}
\label{rmk2}
All assumptions in the above corollary are satisfied more specifically in the case of Cohen-Macaulay, locally complete intersection, and Gorenstein fibers. In fact, the corollary holds for an arbitrary proper flat morphism of excellent integral schemes for these cases at least in characteristic zero by the existence of desingularizations of quasi-excellent schemes of characteristic zero proved by Temkin \cite{Tem}.
\end{remark}

In characteristic $p>0$, there are distinguished classes of Noetherian rings defined via the Frobenius map, which are studied in tight closure theory. It should be pointed out that, as was shown by Fedder \cite{Fed} and Singh \cite{Singh}, the lifting property fails for $F$-regular and $F$-pure rings. One also cannot expect that tight closure commutes with localization (see \cite{BrMo} for a counterexample to this problem). We refer the reader to \cite{Hu} for an account of tight closure theory. 

Here, we state the localization theorem for $F$-rational rings, which admit lifting property, and we do not require any essential part of tight closure theory for the proof. So we only recall the definition. Let $I$ be an ideal of a Noetherian ring $R$ of characteristic $p>0$. Then the \textit{tight closure}, denoted by $I^*$, is the set of all $x \in R$ such that $cx^{p^e} \in I^{[p^e]}$ for $e \gg 0$ and some $c \in R$ not in any minimal prime of $R$. Then the tight closure $I^*$ is an ideal containing $I$.

A (not necessarily local) Noetherian ring $R$ of characteristic $p>0$ is \textit{F-rational}, if every parameter ideal $I$ of $R$ is tightly closed. An ideal $I$ is a \textit{parameter ideal} if $\Ht(I)=\mu(I)$, where $\mu(I)$ is the minimal number of generators of $I$. In fact, an excellent ring $R$ is $F$-rational if and only if the localization $R_{\fp}$ is $F$-rational for all prime ideals $\fp$ of $R$. The following corollary partially answers a question of Hashimoto \cite[Remark 6.7]{Hashi1}.

\begin{corollary}[Localization theorem II]
Let $\phi:R \to S$ be a flat map of finite type of excellent rings of characteristic $p>0$. Assume that $P \cap R$ is a maximal ideal for all maximal ideals $P$ of $S$ and every closed fiber of $\phi$ is geometrically $F$-rational over the corresponding residue field of $R$. Then $\phi$ is an $F$-rational homomorphism.
\end{corollary}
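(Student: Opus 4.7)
My plan is to apply Theorem~\ref{Theorem1} directly with $\mathbf{P}$ equal to the property of being $F$-rational, and verify each of the four hypotheses in turn. I would not reprove any of the topological or Chevalley-type content, which is already packaged into Theorem~\ref{Theorem1}; this is consistent with the author's remark that the proof does not require any essential part of tight closure theory.

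Conditions $(1)$, $(3)$ and $(4)$ are essentially by inspection. For $(1)$, the paragraph preceding the corollary records that in the excellent setting a ring is $F$-rational if and only if all of its localizations at prime ideals are, so $\mathbf{P}=F$-rational is defined at the level of excellent local rings. Condition $(3)$ is just the restatement of the hypothesis that $P \cap R$ is maximal for every maximal ideal $P$ of $S$: closed points of $\Spec S$ map to closed points of $\Spec R$. For $(4)$, the hypothesis that all closed fibers of $\phi$ are geometrically $F$-rational means by definition that for every closed point $s \in \Spec R$ and every finite field extension $k(s) \to L$ the ring $(S \otimes_R k(s)) \otimes_{k(s)} L$ is $F$-rational; equivalently, $k(s) \to S \otimes_R k(s)$ is an $F$-rational homomorphism in the sense of the paper.

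The only nontrivial input is condition $(2)$: that every excellent local ring of characteristic $p>0$ has the lifting property for $F$-rationality. Here I would cite the deformation theorem for $F$-rationality due to Hochster--Huneke, which asserts that if $(R,\fm)$ is an excellent Noetherian local ring of characteristic $p>0$, if $y \in \fm$ is a nonzerodivisor, and if $R/yR$ is $F$-rational, then $R$ itself is $F$-rational. This is the main obstacle in the sense that it is the only step outside the general framework of Theorem~\ref{Theorem1}; once it is invoked, all four hypotheses of Theorem~\ref{Theorem1} hold, and that theorem yields immediately that $\phi$ is an $F$-rational homomorphism.
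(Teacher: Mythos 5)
Your proposal is correct and takes the same route as the paper: apply Theorem~\ref{Theorem1} with $\mathbf{P}=F$-rational, note that conditions $(1)$, $(3)$, $(4)$ are immediate from the hypotheses and the preceding discussion, and isolate condition $(2)$ (the lifting property) as the only substantive input. One small wrinkle worth flagging: the theorem you invoke from Hochster--Huneke (\cite{HoHu2}, Theorem~4.2) is stated there under the additional hypothesis that the local ring is a homomorphic image of a Cohen--Macaulay ring, not for arbitrary excellent local rings. The statement you wrote is nevertheless a true consequence, and the paper makes the bridging step explicit: for an excellent local ring $A$, pass to the $\fm$-adic completion $\widehat{A}$, which is a quotient of a regular (hence Cohen--Macaulay) ring by Cohen's structure theorem; then $A$ is $F$-rational if and only if $\widehat{A}$ is, because $A$ is excellent, so one may apply \cite{HoHu2}, Theorem~4.2, to $\widehat{A}$ and descend. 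You would need to insert this completion argument (or cite a reference that records the deformation theorem in the excellent-local generality) to make the invocation of $(2)$ airtight; otherwise the proof is the same as the paper's.
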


\begin{proof}
Let $A$ be an excellent local ring. Then it is $F$-rational if and only if the $\fm$-adic completion $\widehat{A}$ is so. Note that $\widehat{A}$ is a residue class ring of a Cohen-Macaulay ring. Then we use this fact together with the lifting property for $F$-rational local rings \cite[Theorem 4.2]{HoHu2}, so that $F$-rational excellent rings fulfill the hypotheses of Theorem \ref{Theorem1}.  
\end{proof}

Finally, we prove the generic smoothness of the following form.

\begin{proposition}[Generic smoothness]
\label{Smooth}
Let $f:X \to S$ be a proper flat morphism of Noetherian schemes. Assume that the fiber $X_{s}=X \times_{S} \Spec k(s)$ is smooth over $k(s)$. Then there exists an open subset $U \subseteq S$ with $s \in U$ such that $f:f^{-1}(U) \to U$ is smooth.
\end{proposition}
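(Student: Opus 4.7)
The plan is to apply Theorem~\ref{Theorem1} with $\mathbf{P}$ equal to the property ``smooth over the residue field'' (i.e., geometrically regular fibers), using André's localization theorem \cite{And} to supply the lifting hypothesis, and then to deduce openness of smoothness on $S$ via properness of $f$.

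First, I would localize on the base: set $A := \mathcal{O}_{S,s}$ and let $f_A \colon X \times_S \Spec A \to \Spec A$ denote the base change. The morphism $f_A$ remains proper, flat, of finite type, with the same closed fiber $X_s$, which is smooth over $k(s)$. To meet the excellence hypothesis of Theorem~\ref{Theorem1}, I would further base-change to the $\fm_A$-adic completion $\widehat{A}$ of $A$, which is complete Noetherian local, hence excellent; smoothness of the resulting morphism $f_{\widehat{A}}$ over $\widehat{A}$ will descend to smoothness of $f_A$ over $A$ by faithful flatness of $A \to \widehat{A}$.

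Second, I would verify the four hypotheses of Theorem~\ref{Theorem1} for $\mathbf{P}$ = smooth applied to $f_{\widehat{A}}$: (1) geometric regularity is a local property on local rings; (2) the lifting property at the level of local rings is the content of André's theorem \cite{And}; (3) a proper morphism sends closed points to closed points, and $\Spec \widehat{A}$ has a unique closed point corresponding to $s$; (4) the smoothness of the closed fiber is the standing hypothesis. Theorem~\ref{Theorem1} then yields that $f_{\widehat{A}}$ is a $\mathbf{P}$-homomorphism, i.e., all of its fibers are geometrically regular; combined with flatness and finite type, this makes $f_{\widehat{A}}$, and hence $f_A$ after descent, smooth as a morphism.

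Finally, the non-smooth locus $W \subseteq X$ of $f$ is closed in $X$ by the openness of the smooth locus for finite-type morphisms, and the previous step gives $W \cap X_s = \emptyset$ (since every point of $X_s$ sits in $X_A$, where $f_A$ is smooth). By properness of $f$, the image $f(W)$ is closed in $S$ and excludes $s$, so $U := S \setminus f(W)$ is the required open neighborhood with $f^{-1}(U) \to U$ smooth. The main obstacle I anticipate is the verification of hypothesis~(2) via André's theorem, since supplying lifting for the smoothness property (as opposed to the classical regular case) is the deep ingredient here; the completion trick and the concluding openness-plus-properness argument are routine once that is in place.
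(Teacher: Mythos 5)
Your proof is correct, but it takes a noticeably different route from the paper's. The paper argues directly: by EGA IV (Cor.\ 17.5.2 and Prop.\ 17.15.15) there is an open subset $Z \subseteq X$, the locus where $\Omega_{X/S}$ is locally free of rank $\dim_x f$, with $X_s \subseteq Z$; Chevalley's theorem makes $f(Z^{\mathrm{c}})$ constructible, and Andr\`e's localization theorem \cite{And}, applied to the flat local maps $\mathscr{O}_{S,s} \to \mathscr{O}_{X,x}$, shows that the complement $U$ is stable under generization and hence open. You instead feed $f$ through Theorem~\ref{Theorem1} after localizing at $s$ and completing the base (to supply excellence), descend smoothness back to $\mathscr{O}_{S,s}$, and only then invoke properness. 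Two points deserve flagging. First, the lifting hypothesis (2) of Theorem~\ref{Theorem1} for $\mathbf{P}=$ regular is elementary (if $R/yR$ is regular for a nonzerodivisor $y$, then $R$ is regular by a dimension count) and does not require Andr\`e; Andr\`e's theorem is the \emph{localization} statement that the paper invokes, not a lifting result, so your attribution is off. Second, and more importantly, your concluding paragraph already proves the proposition on its own: the inclusion $X_s \subseteq Z$ follows immediately from $X_s$ being smooth and $f$ being flat of finite type (the fiberwise criterion for smoothness), properness makes $f(Z^{\mathrm{c}})$ closed, and $U := S \setminus f(Z^{\mathrm{c}})$ does the job with no completion trick and no appeal to Theorem~\ref{Theorem1} at all. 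The paper's longer route via Chevalley plus Andr\`e has the merit that it adapts to the going-up morphisms mentioned elsewhere in the paper, where properness is unavailable.
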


\begin{proof}
To simplify notation, we use the same symbol $x$ for a point in $X_s$ or $X$. Note that the sheaf of relative K\"{a}hler differentials $\Omega_{X/S}$ is coherent. Let $\dim_x f$ denote the Krull dimension of the fiber $X_s$ at $x \in X$ with $f(x)=s$.

According to \cite[Corollaire 17.5.2 and Proposition 17.15.15]{Gro2}, our assumptions imply that there exists a maximal open subset $Z \subseteq X$ such that $\Omega_{X/S}$ is locally free over $Z$ of rank equal to $\dim_x f$ at every point $x \in Z$, with $X_{s} \subseteq Z$. Let $Z^{\mathrm{c}}$ be the complement of $Z$ in $X$. Since $f$ is proper, $f(Z^{\mathrm{c}})$ is closed and $s \notin f(Z^{\mathrm{c}})$. Let $U \subseteq S$ be the complement of $f(Z^{\mathrm{c}})$. Then $U$ is open in $S$ and $f:f^{-1}(U) \to U$ is smooth, as required.
\end{proof}

\section{Semicontinuity of global properties on fibers}

We consider the deformation problem for global properties on fibers for a morphism of schemes. The global cases are often related to the vanishing of certain kind of sheaf cohomology modules. In particular, it is essential to relate the sheaf cohomology modules to the local cohomology modules.

We begin with notation for sheaf and local cohomology modules for graded modules (\cite{Har} and \cite{GoWa} as standard references), and Weil divisors with $\mathbb{Q}$-coefficients on projective varieties, which will be used throughout. 

We will assume that the graded ring $R=\bigoplus_{n \ge 0} R_{n}$ is Noetherian with its irrelevant graded ideal $\fm=\bigoplus_{n > 0}R_{n}$. For a graded $R$-module $M$, we put $M(n):=M \otimes_{R} R(n)$ for $n \in \mathbb{Z}$, where $[R(n)]_i:=R_{n+i}$. We often consider the case where $R_0=K$ is a field and thus, $\fm=\bigoplus_{n > 0} R_{n}$ is the unique graded maximal ideal.  For a graded $R$-module $M$, we denote by $H_{\fm}^{i}(M)$ the local cohomology module with support at $\fm$. So this is a graded $R$-module. The \textit{canonical module} $K_R$ is defined as the Matlis dual of $H^{\dim R}_{\fm}(R)$.

Assume that $X$ is a connected normal projective variety over a field $k$. Let $\Div(X)$ be the group of Weil divisors on $X$ and let $\Div(X)_{\mathbb{Q}}:=\Div(X) \otimes \mathbb{Q}$. Let $(f)$ be the divisor associated to a nonzero rational function $f \in k(X)$, where $k(X)$ is the function field of $X$. We say that a $\mathbb{Q}$-divisor $E \in \Div(X)_{\mathbb{Q}}$ is \textit{ample}, if some multiple $nE$ is an ample Cartier divisor for some $n>0$. For a Weil divisor $D$, the sheaf $\mathscr{O}_X(D)$ is defined as a subsheaf of the constant sheaf $k(X)$ by assigning to each open subset $U \subseteq X$ the set of $\{f \in k(X)~|~f=0~\mbox{or}~(f)+D \ge 0~\mbox{on}~U\}$. For $E=\sum r_V \cdot V \in \Div(X)_{\mathbb{Q}}$, we denote by $\lfloor E \rfloor$ the integral divisor $\sum \lbrack r_V \rbrack \cdot V$, where $\lbrack r_V \rbrack$ is the largest integer not exceeding $r_V$. Then let $\mathscr{O}_X(nD):=\mathscr{O}_X(\lfloor nD \rfloor)$. Finally, let $\mathscr{M}(nD):=\mathscr{M} \otimes \mathscr{O}(nD)$.

\begin{definition}[Generalized section ring]
Let $D$ be an ample $\mathbb{Q}$-divisor on a normal projective variety $X$. Then the $\textit{generalized section ring}$ of $X$ with respect to $D$ is defined as 
$$
R=R(X,D):=\bigoplus_{n \in \mathbb{Z}_{\ge 0}} H^{0}(X,\mathscr{O}_{X}(nD)),
$$ 
where we put $H^0(X,\mathscr{O}_X(nD)):=\{f \in k(X)~|~(f)+nD \ge 0\} \cup \{0\}.$
\end{definition}

When $D$ is an ample Cartier divisor, we simply say that $R(X,D)$ is a \textit{section ring}. Then $H^{0}(X,\mathscr{O}_{X}(-nD))=0$ for $n>0$ and an ample Cartier divisor $D$ on an integral projective scheme by \cite[Chapter III, Exercise 7.1]{Har}, so we can take $\mathbb{Z}$ instead of $\mathbb{Z}_{\ge 0}$ as an index set in the setting of section rings. If there is no danger of confusion, we simply write the (generalized) section ring as $R$. The generalized section ring $R(X,D)$ is a finitely generated algebra over a field $H^0(X,\mathscr{O}_X)$ and it recovers $X$, namely, an isomorphism $X \simeq \Proj R(X,D)$ when $X$ is a connected normal variety. Let $K=H^0(X,\mathscr{O}_X)$. Then $k \subseteq K$ is a finite extension of fields. 

If $\mathscr{M}$ is a coherent $\mathscr{O}_X$-module, then 
$$
M:=\bigoplus_{n \in \mathbb{Z}} H^{0}(X,\mathscr{M} \otimes \mathscr{O}_{X}(nD))
$$ 
is a graded $R$-module, and we have $\widetilde{M} \simeq \mathscr{M}$, where $\widetilde{M}$ denotes the associated $\mathscr{O}_X$-module. The verification of this fact is easily reduced to the case of very ample Cartier divisors, where the assertion is well-known, by taking the "Veronese submodule" of $M$. $M$ is finitely generated over $R$ under a very mild condition (see Lemma \ref{Lemma3} below).

Let $M$ be a finitely generated graded $R=R(X,D)$-module. There is an isomorphism of graded $R$-modules:
$$
\bigoplus_{n \in \mathbb{Z}} H^{i}(X,\widetilde{M} \otimes \mathscr{O}_X(nD)) \simeq H^{i+1}_{\fm}(M)
$$
for all $i \ge 1$ (see \cite[Corollaire 2.1.4]{Gro1} for Cartier divisors and \cite[Proposition 2.2]{Wa} for $\mathbb{Q}$-divisors together with an identification $\widetilde{R(n)} \cong \mathscr{O}_X(nD)$ in \cite[Lemma 2.1]{Wa}). Let $f_{1},\ldots,f_{d}$ be a homogeneous system of parameters of $R$ so that $\fm=\sqrt{(f_{1},\ldots,f_{d})}$. For a graded $R$-module $M$, the local cohomology $H^{i}_{\fm}(M)$ can be computed from the modified \v{C}ech complex:
$$
0 \to M \to \bigoplus_{f_i} M_{f_i} \to \bigoplus_{f_i < f_j} M_{f_i f_j} \to \cdots.
$$
Taking the graded components, we have $[H^{i+1}_{\fm}(M)]_{n} \simeq H^{i}(X,\widetilde{M} \otimes\mathscr{O}_X(nD))$ for $i \ge 1$.

\begin{lemma}
\label{Lemma3}
Assume that $X$ is a connected normal projective variety and $\mathscr{M}$ is a coherent $\mathcal{O}_X$-module without associated components of dimension $0$. Set
$$
M=\bigoplus_{n \in \mathbb{Z}} H^{0}(X,\mathscr{M} \otimes \mathscr{O}_X(nD))
$$ 
for an ample $\mathbb{Q}$-divisor $D$. Then $M$ is a finitely generated graded module over $R=R(X,D)$ and $\depth M \ge 2$.
\end{lemma}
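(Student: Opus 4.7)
The plan is to prove finite generation by a Veronese reduction to Serre's generation theorem combined with a Krull-intersection argument for the negative-degree vanishing, and to derive $\depth M\ge 2$ from the standard four-term exact sequence relating a graded module to its sheafification.

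Choose $r>0$ so that $rD$ is a very ample Cartier divisor, and write $R^{(r)}:=\bigoplus_{n\ge 0}R_{rn}$ for the ordinary section ring of $X$ under $|rD|$. Since $R$ is a finitely generated $R^{(r)}$-module by the standard Veronese argument, it suffices to show $M$ is finitely generated over $R^{(r)}$. Decomposing $M=\bigoplus_{j=0}^{r-1}M^{(j)}$ by residues mod $r$ and using the identification $\mathscr{O}_X((rn+j)D)\simeq\mathscr{O}_X(\lfloor jD\rfloor)\otimes\mathscr{O}_X(rnD)$ (valid because $rnD$ is integral Cartier), each $M^{(j)}$ is the twisted global-section module of the coherent sheaf $\mathscr{N}^{(j)}:=\mathscr{M}\otimes\mathscr{O}_X(\lfloor jD\rfloor)$ relative to the very ample line bundle $\mathscr{O}_X(rD)$. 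Serre's generation theorem handles the non-negative part of each $M^{(j)}$, so only the vanishing $M^{(j)}_n=0$ for $n\ll 0$ remains.

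For this vanishing I pick a sufficiently general section $s\in H^0(X,\mathscr{O}_X(rD))$ so that multiplication by $s$ is injective on $\mathscr{N}^{(j)}$; this is possible because $\mathscr{O}_X(rD)$ is very ample and the associated components of $\mathscr{N}^{(j)}$ that matter are finitely many and, at generic points of positive-dimensional subvarieties, agree with those of $\mathscr{M}$, hence are all of dimension $\ge 1$ by hypothesis. Iterating produces a decreasing chain of subspaces $s^k\cdot H^0(X,\mathscr{N}^{(j)}(-krD))\subseteq H^0(X,\mathscr{N}^{(j)})$ inside a finite-dimensional space. A section lying in every member of the chain is, at each stalk $x\in V(s)$, infinitely divisible by $s$, so by Krull's intersection theorem it vanishes in an open neighborhood of $V(s)$; its support is then a closed subset of $\Supp(\mathscr{M})$ whose irreducible components are all of dimension $\ge 1$, and every such subvariety meets the ample effective divisor $V(s)$, forcing the support to be empty. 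The chain therefore intersects in $0$, the dimensions stabilize to $0$, and $M^{(j)}_n=0$ for $n\ll 0$, completing the proof of finite generation.

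For $\depth M\ge 2$, I invoke the standard four-term exact sequence
\[
0\longrightarrow H^0_{\fm}(M)\longrightarrow M\xrightarrow{\alpha}\bigoplus_{n\in\mathbb{Z}}H^0(X,\widetilde{M}\otimes\mathscr{O}_X(nD))\longrightarrow H^1_{\fm}(M)\longrightarrow 0,
\]
valid for every finitely generated graded $R$-module and read off from the \v{C}ech description recalled in the excerpt. By the identification $\widetilde{M}\simeq\mathscr{M}$ noted just before the lemma, the right-hand term is canonically $M$ itself and $\alpha$ is the identity; hence $H^0_{\fm}(M)=H^1_{\fm}(M)=0$, which by Grothendieck's characterization $\depth_R M=\min\{i\mid H^i_{\fm}(M)\ne 0\}$ is equivalent to $\depth M\ge 2$. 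The main obstacle is the negative-degree vanishing: that is the only place where the hypothesis on $0$-dimensional associated components is genuinely used, and the Krull-plus-ampleness argument sketched above is what handles it.
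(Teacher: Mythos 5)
Your proof follows the paper's route: both reduce to the very ample Cartier case via Veronese, and both derive $\depth M \ge 2$ from the four-term exact sequence
$$0 \to H^0_{\fm}(M) \to M \to \bigoplus_{n}H^0(X,\widetilde{M}\otimes\mathscr{O}_X(nD)) \to H^1_{\fm}(M) \to 0,$$
whose middle arrow is the identity because $\widetilde{M}\simeq\mathscr{M}$, so that $H^0_{\fm}(M)=H^1_{\fm}(M)=0$. For finite generation the paper simply cites Goto--Watanabe, whereas you supply a self-contained argument (Serre's generation theorem for the nonnegative part plus a Krull-intersection argument along a Bertini-general ample section for the negative-degree vanishing); that is a reasonable alternative to the citation and not a genuinely different strategy for the lemma.

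The one step that needs care is your assertion that the associated points of $\mathscr{N}^{(j)}:=\mathscr{M}\otimes\mathscr{O}_X(\lfloor jD\rfloor)$ ``agree with those of $\mathscr{M}$'' and are therefore all of dimension $\ge 1$. Since $X$ is only assumed normal and $D$ is a $\mathbb{Q}$-divisor, $\mathscr{O}_X(\lfloor jD\rfloor)$ is merely a reflexive rank-one sheaf, not necessarily invertible, and tensoring a coherent $\mathscr{M}$ with such a sheaf can create torsion supported at closed points in the non-locally-free locus (which has codimension $\ge 2$). For a concrete instance, on the quadric cone $X\subset\mathbb{P}^3$ with vertex $v$ and ruling $\ell$, the module $\mathscr{O}_\ell\otimes\mathscr{O}_X(-\ell)=I_\ell/I_\ell^2$ has a nonzero length-one piece at $v$ even though $\mathscr{O}_\ell$ has no $0$-dimensional associated points. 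Consequently the final step of your Krull-chain argument --- concluding that the support of a section in the intersection is empty because it is contained in $X\setminus V(s)$ and has no $0$-dimensional components --- is not justified unless one already knows $\mathscr{N}^{(j)}$ has no $0$-dimensional associated points, which is precisely the point in question. This gap is genuine, but note it is equally present (tacitly) in the paper's own Veronese reduction, since that reduction applies the very-ample-case lemma to the same twisted sheaves $\mathscr{N}^{(j)}$; it likely reflects an unstated assumption such as $X$ smooth or $D$ Cartier, under which $\mathscr{O}_X(\lfloor jD\rfloor)$ is invertible and your argument (and the paper's) goes through cleanly.
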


\begin{proof}
Recall that there is an identification $\widetilde{R(n)} \cong \mathscr{O}_X(nD)$ \cite[Lemma 2.1]{Wa}. Under our assumption, $M$ is finitely generated \cite[Notation 5.1.4]{GoWa}. A part of the modified \v{C}ech complex just yields the following exact sequence \cite[(2.1.5.)]{Gro1}
$$
0 \to H^{0}_{\fm}(M) \to M \to \bigoplus_{n \in \mathbb{Z}} H^{0}(X,\widetilde{M} \otimes \mathscr{O}_X(nD)) \to H^{1}_{\fm}(M) \to 0.
$$
The middle map is an isomorphism by the definition of $M$. 
From this, it follows that $\depth M \ge 2$, as desired.
\end{proof}

The global properties defined on projective varieties are often related to the vanishing property on certain cohomology modules. We show this first for \textit{arithmetically Cohen-Macaulay} ("ACM" for short) varieties.

\begin{definition}
Let $X$ be a connected normal projective variety over a field $k$. 
\begin{enumerate}

\item
We say that $X$ is \textit{ACM} (with respect to $D$), if the section ring $R(X,D)$ is Cohen-Macaulay for some ample $\mathbb{Q}$-divisor $D$. 

\item
Let $\mathscr{M}$ be a coherent sheaf on $X$. We say that $\mathscr{M}$ is \textit{ACM} (with respect to $D$), if $\mathscr{M}$ is a CM sheaf and if there exists an ample $\mathbb{Q}$-divisor $D$ such that $H^{i}_{\fm}(M)=0$ for all $0 \le i \le \dim X$, where $M:=\bigoplus_{n \in \mathbb{Z}} H^{0}(X,\mathscr{M} \otimes \mathscr{O}_X(nD))$ as a graded module over $R(X,D)$ with a graded maximal ideal $\fm$.
\end{enumerate}
\end{definition}

It is important to keep in mind that our definition is different from the usual definition of ACM varieties. The usual definition says that $X$ is ACM, if the homogeneous coordinate ring of $X$ with respect to some embedding into a projective space is CM. As the current version is flexible, we take it as our definition.

\begin{remark}
\label{rmk3}
\begin{enumerate}
\item
The CM condition is local, while the ACM condition is global. However, if $X$ is ACM, it is locally CM; every local ring $\mathscr{O}_{X,x}$ is CM \cite[Remark (2.11)]{Wa}.

\item
Note that the ACM condition depends on the divisor $D$. When necessary, we shall say that "$\mathscr{M}$ is ACM with respect to $D$". We also use the phrase that "$\mathscr{M}$ is ACM with respect to every ample $\mathbb{Q}$-divisor", when the ACM condition is satisfied for every such divisor.

\item
By taking Veronese subrings (or submodules), it is shown that a sheaf $\mathscr{M}$ is ACM with respect to some ample $\mathbb{Q}$-divisor if and only if $\mathscr{M}$ is ACM with respect to some ample Cartier divisor.

\item
If $\Ass_{\mathscr{O}_X}(\mathscr{M})$ has a component of dimension $0$, then $M$ is, in general, not a finitely generated $R=R(X,D)$-module. For example, take $X=\mathbb{P}^1_k$ and take $\mathscr{M}$ to be the structure sheaf of a closed point of $\mathbb{P}^1_k$. Then $\Gamma(\mathbb{P}^1_k,\mathscr{M}(n))=k$ for all $n \in \mathbb{Z}$. Hence $M$ is not finitely generated over $k[x]=\bigoplus_{n \ge 0} H^0(\mathbb{P}^1_k,\mathscr{O}(n))$.

\item
Let $\overline{k}$ be an algebraic closure of $k$. The ACM condition holds over $\mathscr{M}$ on $X$ if and only if the ACM condition holds over $\mathscr{M} \otimes \overline{k}$ on $X \times \Spec \overline{k}$.
\end{enumerate}
\end{remark}

\begin{definition}
Let $R=\bigoplus_{n \ge 0} R_n$ be a Noetherian graded ring over a field $R_0=k$ with $\fm=\bigoplus_{n>0} R_n$. Let $M$ be a nonzero finitely generated graded $R$-module. Then we say that $R$ is a \textit{graded MCM module}, if $H^i_{\fm}(M)=0$ for all $i<\dim R$. Equivalently, every homogeneous system of parameters of $R$ is a regular sequence on $M$.
\end{definition}

By \cite[(2.1.27)]{BrHer}, a graded $R$-module $M$ is graded MCM if and only if $M_{\fm}$ is MCM over $R_{\fm}$. In particular, a graded MCM module $M$ is a CM module over $R$ and we use this fact in the proof of the next proposition.

\begin{proposition}(\cite[Proposition 2.1]{CaHar})
\label{Proposition1}
Let $X$ be a connected normal projective variety and fix an ample $\mathbb{Q}$-divisor $D$ on $X$. Then there is a one-to-one correspondence between ACM sheaves with respect to $D$ and graded MCM modules over $R=R(X,D)$. This correspondence is given by 
$$
\mathscr{M} \mapsto M=\bigoplus_{n \in \mathbb{Z}} H^{0}(X,\mathscr{M}\otimes \mathscr{O}_X(nD)).
$$
Hence $\mathscr{M}$ is ACM with respect to $D$ if and only if $M$ is a graded MCM module over $R$.
\end{proposition}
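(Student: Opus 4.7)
The plan is to verify that $\mathscr{M}\mapsto M:=\bigoplus_{n}H^{0}(X,\mathscr{M}\otimes\mathscr{O}_X(nD))$ and sheafification $N\mapsto\widetilde{N}$ are mutually inverse between ACM sheaves with respect to $D$ and graded MCM modules over $R=R(X,D)$. I will combine three tools already in hand: the identification $\widetilde{M}\simeq\mathscr{M}$ noted before Lemma~\ref{Lemma3}; the Serre-type isomorphism $[H^{i+1}_{\fm}(M)]_n\simeq H^{i}(X,\widetilde{M}\otimes\mathscr{O}_X(nD))$ valid for $i\ge 1$; and the four-term exact sequence from Lemma~\ref{Lemma3} together with its finite-generation conclusion.

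For the forward direction, starting from an ACM sheaf $\mathscr{M}$, I will first use the vanishings $H^{0}_{\fm}(M)=H^{1}_{\fm}(M)=0$ contained in the ACM hypothesis to rule out $0$-dimensional associated components of $\mathscr{M}$, after which Lemma~\ref{Lemma3} ensures $M$ is a finitely generated $R$-module with $\depth M\ge 2$. Since $\dim R=\dim X+1$, the MCM condition on $M$ is precisely $H^{i}_{\fm}(M)=0$ for $0\le i\le\dim X$: degrees $0$ and $1$ are now in hand, and for $2\le i\le\dim X$ the Serre-type isomorphism translates it into the intermediate cohomology vanishings $H^{i-1}(X,\mathscr{M}\otimes\mathscr{O}_X(nD))=0$ that are built into the ACM hypothesis. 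Combined with $\widetilde{M}\simeq\mathscr{M}$, this shows the assignment is well defined and injective on isomorphism classes.

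For the reverse direction, given a graded MCM module $N$, I will set $\mathscr{N}:=\widetilde{N}$ on $X=\Proj R$. At any non-irrelevant homogeneous prime $\fp$ I will use the Laurent-polynomial identification $N_{\fp}\simeq N_{(\fp)}[t,t^{-1}]$ for some homogeneous $t\notin\fp$: since $N$ is MCM, hence Cohen--Macaulay of full support, $N_{\fp}$ is CM of dimension $\Ht\fp$, and the Laurent-polynomial identity then forces $N_{(\fp)}$ to be CM of dimension $\Ht\fp-1=\dim R_{(\fp)}$, so every stalk $\mathscr{N}_{x}=N_{(\fp)}$ is Cohen--Macaulay and $\mathscr{N}$ is a CM sheaf. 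Using the MCM vanishings $H^{0}_{\fm}(N)=H^{1}_{\fm}(N)=0$, Lemma~\ref{Lemma3} will collapse to
\[
N\;\simeq\;\bigoplus_{n\in\mathbb{Z}}H^{0}(X,\mathscr{N}\otimes\mathscr{O}_X(nD)),
\]
recovering $N$ from its sheafification, and the remaining MCM vanishings $H^{i}_{\fm}(N)=0$ for $2\le i\le\dim X$ will transfer through the Serre-type isomorphism to the intermediate cohomology vanishings required of an ACM sheaf.

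The step I expect to be most delicate is verifying Cohen--Macaulayness of the stalks of $\widetilde{N}$ from the graded MCM hypothesis on $N$; I will handle it through the Laurent-polynomial identification $N_{\fp}\simeq N_{(\fp)}[t,t^{-1}]$ combined with faithfully flat descent of the Cohen--Macaulay property. Everything else should reduce to routine bookkeeping with Lemma~\ref{Lemma3} and the Serre-type isomorphism already recorded in the text.
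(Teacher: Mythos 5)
Your overall blueprint is the same as the paper's: the forward direction (ACM sheaf gives a graded MCM module) is essentially the definition plus Lemma~\ref{Lemma3} and the Serre-type isomorphism, while the substantive content is the reverse direction, which you, like the paper, handle by showing that the degree-zero part of the homogeneous localization of a graded MCM module at a non-irrelevant graded prime is Cohen--Macaulay via the Laurent-polynomial decomposition and faithfully flat descent. However, you have skipped a step the paper treats as essential, and for good reason: the Laurent-polynomial identification $T^{-1}N\simeq N_{(\fp)}[t,t^{-1}]$ (where $T$ is the set of homogeneous elements outside $\fp$) requires the homogeneous localization of $R$ at $\fp$ to contain a unit of degree one. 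For a generalized section ring $R(X,D)$ attached to an ample $\mathbb{Q}$-divisor $D$, this typically fails: $R$ need not be generated in degree one, and in examples like $k[x,y]$ with $\deg x=2$, $\deg y=3$ localized at $\fp=(x)$, the graded localization has units only in degrees divisible by $3$ and is not a Laurent-polynomial algebra over its degree-zero part. The paper resolves this by first passing to a Veronese subring $R^{(k)}$ for $k\gg0$, which is standard graded, noting that $M^{(k)}$ remains MCM and $\widetilde{M^{(k)}}\simeq\widetilde{M}$ on $\Proj R \simeq \Proj R^{(k)}$; only then does the Goto--Watanabe Laurent-polynomial description apply. Without this reduction your argument does not go through as written.

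A second, smaller point: you claim to use $H^0_{\fm}(M)=H^1_{\fm}(M)=0$ to \emph{rule out} $0$-dimensional associated components of $\mathscr{M}$, but these vanishings do not imply the absence of such components. If $\mathscr{M}$ is a skyscraper at a closed point, a direct check shows $H^i_{\fm}(M)=0$ for all $i\le\dim X$ while $M$ is infinitely generated. The hypothesis that $\mathscr{M}$ has no $0$-dimensional associated components should simply be imported, as in Lemmas~\ref{Lemma3} and~\ref{Lemma4}, rather than derived; it is the finiteness of $M$ that one uses, and Lemma~\ref{Lemma3} supplies that once the hypothesis is in place.
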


\begin{proof}
It suffices to prove that if $M$ is a graded MCM module, then $\widetilde{M}$ is a CM sheaf. Note that there exists an integer $n_0>0$ such that the $n$-th Veronese subring $R^{(n)}$ of $R$ is a standard graded ring over the field $k=H^0(X,\mathscr{O}_X)$ for every $n \ge n_0$. Let $\fp$ be a graded prime ideal of $R$ not containing the irrelevant maximal ideal of $R$, and let $M_{(\fp)}$ be the homogeneous localization. Then $M_{(\fp)}$ is a CM module over $R_{(\fp)}$. Since $R_{(\fp)}$ has a unit element of degree one, we have $R_{(\fp)}=(R_{(\fp)})_0[T,T^{-1}]$ and so $M_{(\fp)}=(M_{(\fp)})_0[T,T^{-1}]$ (see \cite[Sublemma (5.1.3) and Lemma (5.1.10)]{GoWa}). From this description, $(M_{(\fp)})_0$ is a CM module over $(R_{(\fp)})_0$. Thus, the sheaf $\widetilde{M}$ is CM, as desired.
\end{proof}

We prove the following characterization of ACM sheaves.

\begin{lemma}
\label{Lemma4}
Let $X$ be a connected normal projective variety and let $\mathscr{M}$ be a coherent sheaf on $X$ without associated components of dimension $0$. Then the following conditions are equivalent:

\begin{enumerate}
\item
The $\mathscr{O}_X$-module $\mathscr{M}$ is an ACM sheaf.

\item
The $\mathscr{O}_X$-module $\mathscr{M}$ is CM and $H^i(X,\mathscr{M}\otimes \mathscr{O}_X(nD))=0$ for some ample $\mathbb{Q}$-divisor $D$, all $0<i<\dim X$, and all $n \in \mathbb{Z}$.
\end{enumerate}
\end{lemma}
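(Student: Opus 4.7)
The plan is to transfer everything to the language of graded local cohomology over the generalized section ring $R=R(X,D)$ and its graded module $M:=\bigoplus_{n\in\mathbb{Z}}H^0(X,\mathscr{M}\otimes\mathscr{O}_X(nD))$, and then invoke Proposition~\ref{Proposition1}, which identifies ACM sheaves with graded MCM modules. The bridge, already quoted in the paragraph preceding Lemma~\ref{Lemma3}, is the graded isomorphism
\[
[H^{i+1}_{\fm}(M)]_n \simeq H^i(X,\mathscr{M}\otimes\mathscr{O}_X(nD)) \qquad (i\ge 1),
\]
together with the identification $\widetilde{M}\simeq \mathscr{M}$. The hypothesis that $\mathscr{M}$ has no associated components of dimension zero is precisely what is needed to apply Lemma~\ref{Lemma3}, giving that $M$ is a finitely generated $R$-module with $\depth M\ge 2$; equivalently, $H^0_{\fm}(M)=H^1_{\fm}(M)=0$. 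This is the key technical input: it discards the bottom two local cohomologies for free and lets one reformulate the ACM condition purely in terms of the sheaf cohomologies in the intermediate range $0<i<\dim X$.

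For $(1)\Rightarrow(2)$ I would note that $\mathscr{M}$ is CM by definition of ACM, and the vanishing $H^i_{\fm}(M)=0$ for $0\le i\le \dim X$ transports through the displayed isomorphism to $H^i(X,\mathscr{M}\otimes\mathscr{O}_X(nD))=0$ for all $n\in\mathbb{Z}$ and all $0<i<\dim X$, yielding $(2)$. Conversely, for $(2)\Rightarrow(1)$, $\mathscr{M}$ is CM by hypothesis and $H^0_{\fm}(M)=H^1_{\fm}(M)=0$ by Lemma~\ref{Lemma3}; for each $j$ with $2\le j\le\dim X$ I set $i=j-1$, so that $0<i<\dim X$, and the isomorphism together with the assumption in $(2)$ gives $[H^j_{\fm}(M)]_n\simeq H^{i}(X,\mathscr{M}\otimes\mathscr{O}_X(nD))=0$ for every $n$; hence $H^j_{\fm}(M)=0$. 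Combining, $H^i_{\fm}(M)=0$ for all $0\le i\le\dim X$, so $M$ is a graded MCM module over $R$, and Proposition~\ref{Proposition1} lets me conclude that $\mathscr{M}$ is ACM with respect to $D$.

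I do not anticipate a serious obstacle here: the argument is essentially bookkeeping built on two inputs that have already been made available in the text. The only point that deserves a little care is using the graded local-cohomology isomorphism in the form valid for ample $\mathbb{Q}$-divisors (Watanabe), rather than only for integral Cartier divisors, so that the same proof works uniformly in the definitions adopted above; everything else is immediate from Lemma~\ref{Lemma3} and Proposition~\ref{Proposition1}.
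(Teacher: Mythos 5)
Your proof is correct and follows essentially the same route as the paper: use Lemma~\ref{Lemma3} to kill $H^0_{\fm}(M)$ and $H^1_{\fm}(M)$, then translate the remaining local cohomology vanishings to and from sheaf cohomology via the isomorphism $[H^{i+1}_{\fm}(M)]_n\simeq H^i(X,\mathscr{M}\otimes\mathscr{O}_X(nD))$. The only stylistic difference is that you invoke Proposition~\ref{Proposition1} to finish $(2)\Rightarrow(1)$, whereas the paper reads the conclusion directly off the definition of an ACM sheaf (CM plus $H^i_{\fm}(M)=0$ for $0\le i\le\dim X$); this is a harmless redundancy, not a gap.
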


\begin{proof}
We put $M=\bigoplus_{n \in \mathbb{Z}} H^{0}(X,\mathscr{M}\otimes \mathscr{O}_X(nD))$. Then $M$ is a graded module over $R(X,D)$. Under the notation as above, it follows that $\mathscr{M} \simeq \widetilde{M}$ and $H^{i}_{\fm}(M)=0$ for $i=0,1$ by Lemma~\ref{Lemma3}. Hence, the lemma follows in view of the identifications $[H^{i+1}_{\fm}(M)]_{n} \simeq H^{i}(X,\mathscr{M} \otimes \mathscr{O}_X(nD))$.
\end{proof}

\begin{remark}
\label{rmk4}
There is a characterization of ACM varieties in terms of the vanishing of cohomology groups without twisting sheaves. Let $X$ be a Cohen-Macaulay projective variety. If $H^i(X,\mathscr{O}_X)=0$ for all $0<i<\dim X$, then there exists a Cohen-Macaulay graded ring $R$ such that $X \simeq \Proj(R)$. For the proof of this fact, see \cite[(5.1.11)]{GoWa} and \cite[Proposition 6.1]{Smith}. More generally, if $E$ is an ample Cartier divisor on $X$, the ring $R(X,E)$ is CM if and only if $X$ is CM and $H^i(X,\mathscr{O}_X(nE))=0$ for all $0<i<\dim X$ and all $n \in \mathbb{N}$.
\end{remark}

The following proposition was originally proved in Grothendieck's EGA (see \cite[P. 579]{GorWed}) in the case that $\mathscr{M}=\mathscr{O}_X$.

\begin{proposition}
\label{Proposition2}
Let $f:X \to S$ be a projective surjective morphism of Noetherian schemes with equi-dimensional fibers. Let$\mathscr{M}$ be a coherent $\mathscr{O}_{X}$-module which is flat over $S$ such that $\Supp (\mathscr{M})=X$ and $\mathbf{P}=CM$. Then $U_{f}^{\mathscr{M}}(\mathbf{P})$ is Zariski open in $S$.
\end{proposition}

\begin{proof}
Every fiber of $f$ is an equi-dimensional projective scheme over a field by assumption. Moreover, since $\Supp (\mathscr{M})=X$, we have that $\Supp (\mathscr{M}_s)=X_s$ for every $s \in S$ by Nakayama's lemma. Now assume that $\mathscr{M}_s$ is a CM sheaf for some $s \in S$. Let $D$ be an $f$-ample Cartier divisor on $X$. Then $D_t$ is an ample divisor on the fiber $X_t$ for all $t \in S$. Since $\mathscr{M}_s$ is assumed to be CM, we have $H^i(X_{s},\mathscr{M}_s \otimes \mathscr{O}_{X_s}(-nD_s))=0$ for all $i <\dim X_{s}$ and $n \gg 0$ by \cite[Corollary 5.72]{KolMo}. We know that the fiber dimension $t \in S \mapsto \dim X_t$ is upper-semicontinuous by projectivity of $f$ and the function $t \in S \mapsto \dim_{k(t)} H^i(X_{t},\mathscr{M}_t \otimes \mathscr{O}_{X_t}(-nD_t))$ is upper-semicontinuous by projectivity of $f$ and flatness of $\mathscr{M}$ over $S$. Then there exists an open subset $s \in U \subseteq S$ such that $H^i(X_{t},\mathscr{M}_t \otimes \mathscr{O}_{X_t}(-nD_t))=0$ for all $t \in U$, $i<\dim X_t$ and $n \gg 0$. We compete the proof of the proposition by \cite[Corollary 5.72]{KolMo}.
\end{proof}

\begin{theorem}[Semicontinuity of ACM sheaves]
\label{Theorem2}
Let $f:X \to S$ be a projective surjective morphism of Noetherian schemes whose geometric fibers are connected and normal. Fix an $f$-ample divisor $D$. Let $\mathscr{M}$ be a coherent $\mathscr{O}_{X}$-module which is flat over $S$ such that $\Supp (\mathscr{M})=X$ and $\mathbf{P}=ACM$. Then $U_{f}^{\mathscr{M}}(\mathbf{P})$ with respect to $D$ in the fiberwise sense is Zariski open in $S$. 
\end{theorem}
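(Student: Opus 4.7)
The plan is to invoke Lemma \ref{Lemma4} to split the fiberwise ACM condition into two pieces: (a) the fiberwise CM condition on $\mathscr{M}$, which is handled by Proposition \ref{Proposition2}, and (b) the fiberwise vanishing $H^{i}(X_{s},\mathscr{M}_{s}\otimes\mathscr{O}_{X_{s}}(nD_{s}))=0$ for all $0<i<\dim X_{s}$ and all $n\in\mathbb{Z}$. The difficulty is that (b) involves infinitely many twists, so the main task is to show that the range of $n$ for which vanishing can fail is uniformly bounded in $s$, thereby reducing to a finite cohomological check to which standard semicontinuity applies.

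First I would reduce to the case of an $f$-ample \emph{Cartier} divisor. Since $D$ is $\mathbb{Q}$-Cartier and $f$-ample, some positive multiple $rD$ is an $f$-ample Cartier divisor, and by Remark \ref{rmk3}(iii) (the Veronese argument) the ACM property for $\mathscr{M}_{s}$ with respect to $D_{s}$ is equivalent to the ACM property with respect to $rD_{s}$; so we may replace $D$ by $rD$. Flatness of $f$ makes $d:=\dim X_{s}$ locally constant, and after passing to a connected component we may assume it is constant. Applying Proposition \ref{Proposition2} we may shrink $S$ to the open locus where $\mathscr{M}_{s}$ is CM. By Lemma \ref{Lemma4}, it then suffices to prove that
\[
U=\bigl\{s\in S\,:\,H^{i}(X_{s},\mathscr{M}_{s}\otimes\mathscr{O}_{X_{s}}(nD_{s}))=0\text{ for every }0<i<d\text{ and every }n\in\mathbb{Z}\bigr\}
\]
is Zariski open.

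For large positive $n$, this follows from relative Serre vanishing: since $f$ is proper, $\mathscr{M}$ is $f$-flat, $D$ is $f$-ample and $S$ is quasi-compact, there exists $n_{0}$ with $R^{i}f_{*}(\mathscr{M}(nD))=0$ for all $i>0$ and $n\ge n_{0}$, and cohomology and base change then gives $H^{i}(X_{s},\mathscr{M}_{s}(nD_{s}))=0$ for every $s$, every $i>0$, and every $n\ge n_{0}$. For large negative $n$, I would exploit the relative dualizing sheaf $\omega_{X/S}$, which exists because $S$ admits a dualizing complex and $f$ is proper flat with CM fibers; then $\mathscr{N}:=\mathscr{H}om_{\mathscr{O}_{X}}(\mathscr{M},\omega_{X/S})$ is $f$-flat with fiber $\mathscr{N}_{s}\cong \mathscr{H}om(\mathscr{M}_{s},\omega_{X_{s}})$. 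Relative Serre vanishing for $\mathscr{N}$ yields an integer $n_{1}$ such that $R^{i}f_{*}(\mathscr{N}(mD))=0$ for all $i>0$ and $m\ge n_{1}$, and fiberwise Serre duality (valid since $\mathscr{M}_{s}$ is CM of dimension $d$ on a normal projective variety) identifies $H^{i}(X_{s},\mathscr{M}_{s}(nD_{s}))$ with the $k(s)$-dual of $H^{d-i}(X_{s},\mathscr{N}_{s}(-nD_{s}))$, which vanishes whenever $0<i<d$ and $n\le -n_{1}$.

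Only finitely many integers $n$ in the range $-n_{1}<n<n_{0}$ and finitely many indices $0<i<d$ now remain. For each such pair, $\mathscr{M}\otimes\mathscr{O}_{X}(nD)$ is a coherent $\mathscr{O}_{X}$-module flat over $S$, so by the Grothendieck semicontinuity theorem for proper flat families the function $s\mapsto\dim_{k(s)}H^{i}(X_{s},\mathscr{M}_{s}(nD_{s}))$ is upper semicontinuous, hence its zero locus is Zariski open. The intersection of these finitely many open sets equals $U$, proving openness. The main obstacle in carrying this out rigorously is guaranteeing the existence, $S$-flatness, and correct base-change behavior of the relative dualizing module $\omega_{X/S}$ needed for the Serre-duality reduction at $n\ll 0$; this is precisely where the hypothesis that $S$ admits a dualizing complex together with the CM reduction from Proposition \ref{Proposition2} is used.
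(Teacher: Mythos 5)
Your reduction is the same one the paper uses: Lemma~\ref{Lemma4} to split the fiberwise ACM condition into the CM condition plus the vanishing of all intermediate twisted cohomologies, Proposition~\ref{Proposition2} for the CM part, and upper semicontinuity of $\dim_{k(t)} H^i(X_t,\mathscr{M}_t(nD_t))$ for the vanishing part. What you add --- and what the paper's written proof passes over silently --- is the justification that only finitely many twists $n$ actually need to be controlled. As the paper's proof is phrased, the appeal to upper semicontinuity only makes each individual zero locus $\{t \in S : H^i(X_t,\mathscr{M}_t(nD_t))=0\}$ open; it does not by itself give openness of the intersection over all $n\in\mathbb{Z}$. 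Your uniform bounds, via relative Serre vanishing for $n\gg 0$ and relative Serre duality for $n\ll 0$, supply exactly the missing finiteness, so your write-up is the more complete version of the same argument.

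Two points in your $n\ll 0$ step deserve explicit care. First, after shrinking $S$ by Proposition~\ref{Proposition2} you know the fibers of $\mathscr{M}$ are CM, but the fibers $X_t$ themselves are only assumed normal; so $\omega_{X/S}$ should be taken as the bottom cohomology sheaf of the relative dualizing complex $f^{!}\mathscr{O}_S$ (on a non--CM fiber this complex is not a shifted sheaf). Second, both the duality identification $H^i(X_t,\mathscr{M}_t(nD_t))^{\vee}\cong H^{\dim X_t-i}\bigl(X_t,\mathscr{H}om(\mathscr{M}_t,\omega_{X_t})(-nD_t)\bigr)$ and the $S$-flatness and base-change compatibility of $\mathscr{N}=\mathscr{H}om(\mathscr{M},\omega_{X/S})$ use that $\mathscr{M}_t$ is CM of maximal dimension $\dim X_t$; this holds at the chosen point $s$ (a graded MCM module has maximal dimension) and persists after shrinking $S$ because $S$-flatness of $\mathscr{M}$ forces the Hilbert polynomial, hence the dimension of $\operatorname{Supp}\mathscr{M}_t$, to be locally constant. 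With these details spelled out, your proof is sound.
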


\begin{proof}
Recall that the ACM condition is defined on a normal projective variety. By assumption, if $X_t$ denotes the fiber of $f$ for $t \in S$, then the base change $X_t \times \Spec \overline{k(t)}$ is a connected normal projective variety.

Assume that $\mathscr{M}_s$ is an ACM sheaf with respect to $D_s$ for some $s \in S$. Then $\mathscr{M}_s$ is CM. By applying Proposition \ref{Proposition2}, we may assume that $\mathscr{M}_t$ is CM for every $t \in S$ by shrinking $S$ to its smaller affine open neighborhood of $s \in S$ as above. Now by Lemma \ref{Lemma4}, we have $H^i(X_{s},\mathscr{M}_s \otimes \mathscr{O}_{X_s}(nD_s))=0$ for all $0< i <\dim X_{s}$ and $n \in \mathbb{Z}$. Using the upper-semicontinuity of the fiber dimension and the function: $
\dim_{k(t)} H^i(X_{t},\mathscr{M}_t \otimes \mathscr{O}_{X_t}(nD_t))~(t \in S)$, it follows that there exists an open subset $s \in U \subseteq S$ such that $H^i(X_{t},\mathscr{M}_t \otimes \mathscr{O}_{X_t}(nD_t))=0$ for all $t \in U$, $0< i <\dim X_t$, and $n \in \mathbb{Z}$, which is the desired conclusion by Lemma \ref{Lemma4}.
\end{proof}

The MCM condition on modules is closely related to $F$-purity of Noetherian rings of characteristic $p>0$. We refer the reader to \cite[Theorem 4.4]{ShiZha} and \cite{ShiZha2}. See also \cite[Theorem 5.8]{Hashi1}. We will discuss the global splitting of the Frobenius morphism on a projective variety in the next section.

\section{Characterization of globally $F$-regular varieties}

As was mentioned in the last part of the previous section, it is expected that ACM sheaves are closely related to the global $F$-regularity for projective varieties. As a clear evidence for this, we obtain a characterization of global $F$-regularity in terms of the Cohen-Macaulay property on the cokernel of a sheaf map defined by the Frobenius morphism. We also establish the semicontinuity property along fibers.

We begin with recalling the definition of globally $F$-regular varieties, a class of varieties introduced by Smith \cite{Smith}. Let $X$ be a projective variety over a field $k$ of characteristic $p > 0$ such that $[k:k^{p}] < \infty$. Every residue field of an algebra essentially of finite type over an $F$-finite field is again $F$-finite.

Let us make some review on the splitting of maps between modules. Let $f:M \to N$ be a map of modules over a Noetherian ring $R$. Then $f$ is \textit{pure}, if the induced map of $R$-modules: $f \otimes \mathrm{id}:M \otimes_{R} K \to N \otimes_{R} K$ stays injective for every $R$-module $K$. If the quotient $M/N$ is finitely generated, then the splitting of $f$ is equivalent to the purity of $f$. Here is another useful criterion for purity.

\begin{lemma}[Hochster-Huneke]
\label{Lemma5}
Assume that $(R,\fm)$ is either graded or Noetherian local ring and $E$ is an injective hull of the residue field of $R$. Let $f:M \to N$ be an $R$-module map, where $M$ is a finitely generated free $R$-module. Then $f:M \to N$ is pure if and only if the induced map of $R$-modules $f \otimes \mathrm{id}:M \otimes_{R} E \to N \otimes_{R} E$ is injective.
\end{lemma}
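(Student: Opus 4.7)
The forward direction is nothing more than applying the definition of purity to the specific module $K=E$, so I will concentrate on the converse. I plan to argue by contradiction: assume $f \otimes \mathrm{id}_E$ is injective, and suppose that $f$ fails to be pure, so that there exists some $R$-module $K$ together with a nonzero element $\xi \in M \otimes_R K$ satisfying $(f \otimes \mathrm{id}_K)(\xi) = 0$. Since $M$ is finitely generated free, say $M \cong R^n$, the identification $M \otimes_R K \cong K^n$ lets me write $\xi = (k_1, \ldots, k_n)$ with at least one component, say $k_{i_0}$, nonzero.

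The heart of the argument is to detect $k_{i_0}$ by a morphism into $E$. The cyclic submodule $R k_{i_0} \subseteq K$ is isomorphic to $R/\Ann(k_{i_0})$, and $\Ann(k_{i_0}) \subseteq \fm$ because $k_{i_0} \ne 0$; hence $R k_{i_0}$ surjects onto the residue field $\kappa = R/\fm$. Composing with the canonical embedding $\kappa = (0 :_E \fm) \hookrightarrow E$ produces an $R$-linear map $R k_{i_0} \to E$ sending $k_{i_0}$ to a nonzero socle element of $E$, and injectivity of $E$ extends this to a map $\varphi \colon K \to E$ with $\varphi(k_{i_0}) \ne 0$. In the graded situation I will repeat this construction with the graded injective hull of $\kappa$, working throughout with homogeneous homomorphisms (possibly after shifting $\varphi$ by an appropriate degree); the cogenerator property of $E$ in the graded category is what carries the argument over unchanged.

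The contradiction then drops out of the naturality square
$$
\begin{CD}
M \otimes_R K @>{f \otimes \mathrm{id}_K}>> N \otimes_R K \\
@V{\mathrm{id}_M \otimes \varphi}VV @VV{\mathrm{id}_N \otimes \varphi}V \\
M \otimes_R E @>{f \otimes \mathrm{id}_E}>> N \otimes_R E.
\end{CD}
$$
Going right then down sends $\xi$ to $(\mathrm{id}_N \otimes \varphi)(0) = 0$. Going down then right sends $\xi$ to the image under $f \otimes \mathrm{id}_E$ of $(\varphi(k_1), \ldots, \varphi(k_n)) \in M \otimes_R E \cong E^n$, which is nonzero because $\varphi(k_{i_0}) \ne 0$ and $f \otimes \mathrm{id}_E$ is injective by hypothesis. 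The two outputs disagree, contradicting commutativity and completing the proof.

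The main obstacle I anticipate is the uniform treatment of the graded and local cases: one must verify that the cogenerator property $\kappa \hookrightarrow E$ really remains at one's disposal in the graded setting, and that the diagram chase can be done with graded homomorphisms without disturbing the injectivity hypothesis. Once this bookkeeping is in place, the argument is formally identical in both cases, and no completeness hypothesis on $R$ is required because all that is ever used is the injectivity of $E$ and the fact that the socle of $E$ is nonzero.
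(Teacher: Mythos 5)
The paper gives no proof of this lemma: it simply refers to Exercise~9.3 of \cite{Hu} and remarks that the argument carries over to the graded case because a graded injective hull exists. Your proposal supplies the argument that the paper outsources, and it is correct: the forward direction is the definition of purity specialized to $K=E$, and the converse is the standard injective-cogenerator argument, exploiting that $M\cong R^n$ so that a failure of purity against some $K$ produces a tuple $(k_1,\ldots,k_n)\in K^n$ with a nonzero entry $k_{i_0}$, which one detects by a map $\varphi\colon K\to E$ obtained by projecting $Rk_{i_0}\cong R/\Ann(k_{i_0})$ onto the residue field, embedding into the socle of $E$, and extending by injectivity; the naturality square then contradicts injectivity of $f\otimes\mathrm{id}_E$.

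One point worth making precise, which you gesture at but should state: in the graded case the argument requires $K$ to be a graded module and $\xi$ (hence $k_{i_0}$) to be homogeneous, so that $\Ann(k_{i_0})$ is a proper \emph{graded} ideal and therefore lands in the unique graded maximal ideal $\fm$; one then extends using the $*$-injectivity of the graded injective hull, allowing degree shifts. What this establishes is purity within the category of graded $R$-modules, not a priori purity against arbitrary $R$-modules. This is, however, exactly what the paper needs: in the proof of Theorem~\ref{Theorem3} the lemma is invoked to split a graded map with finitely generated cokernel, and graded purity together with finite generation of the cokernel yields a graded splitting (hence a splitting tout court). So the proof is sound and the graded bookkeeping you flag as the ``main obstacle'' is handled correctly; I would only add the explicit restriction to graded $K$ so the appeal to $\Ann(k_{i_0})\subseteq\fm$ is airtight.
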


\begin{proof}
We refer the reader to \cite[Exercise 9.3]{Hu}. There, the proof is given only for local rings. However, as the injective hull exists in the category of graded rings, the lemma holds for a graded ring $(R,\fm)$ as well.
\end{proof}

Now let $R$ be a reduced Noetherian ring of characteristic $p > 0$ such that $R^{\frac{1}{p}}$ is module-finite over $R$ (such a ring is called \textit{F-finite}). Then $R$ is \textit{strongly F-regular} if for $c \in R$ not in every minimal prime of $R$, the $R$-module map $R \to R^{\frac{1}{q}}$ sending $1$ to $c^{\frac{1}{q}}$ splits for $q \gg 0$. For more results concerning this notion, we refer to \cite{HoHu1}. Here we only mention the following:

\begin{proposition}[Hochster-Huneke]
\label{Proposition3}
Assume that $R$ is a reduced $F$-finite Noetherian ring of characteristic $p>0$. Then the following hold:

\begin{enumerate}
\item
Any regular ring is strongly $F$-regular.

\item
$R$ is strongly $F$-regular if and only if $R_{P}$ is strongly $F$-regular for every prime $P$ of $R$.

\item
Let $c \in R$ be any element such that it is not in any minimal prime of $R$ and $R_{c}$ is strongly $F$-regular. Then $R$ is strongly $F$-regular if and only if the map of $R$-modules $R \to R^{\frac{1}{q}}; 1 \to c^{\frac{1}{q}}$ splits for some $q=p^e$.

\end{enumerate}
\end{proposition}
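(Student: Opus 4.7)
The plan is to prove the three parts with (3) doing most of the substantive work and (2) reducing to (3).

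For (1), I would reduce to the $F$-finite local case, since splitting is a local property for maps between finitely presented modules. In a regular $F$-finite local ring $(R,\fm)$, Kunz's theorem gives that $R^{1/q}$ is a free $R$-module for every $q=p^e$. For nonzero $c\in R$, Krull's intersection theorem produces $q$ large enough that $c \notin \fm^{[q]}$. Then Lemma \ref{Lemma5} reduces purity of the map $R \to R^{1/q}$, $1 \mapsto c^{1/q}$, to the nonvanishing of $1\otimes\eta \mapsto c^{1/q}\otimes\eta$ in $R^{1/q}\otimes_R E$, where $\eta$ generates the socle of $E$; this amounts to $c^{1/q}\notin \fm R^{1/q}$, equivalent to $c\notin \fm^{[q]}$. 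Purity together with $R^{1/q}$ being free yields the splitting.

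For (3), the substantive direction assumes the splitting of $\theta : R \to R^{1/q_0}$, $1 \mapsto c^{1/q_0}$, and strong $F$-regularity of $R_c$, and concludes strong $F$-regularity of $R$. Writing $\psi$ for the given splitting and iterating by applying the Frobenius-root functor $(-)^{1/q_0^{k-1}}$ to $\theta$ at step $k$ and composing gives split maps $R \to R^{1/q_0^m}$ sending $1\mapsto c^{\alpha_m/q_0^m}$ with $\alpha_m = 1+q_0+\cdots+q_0^{m-1}$. For any fixed $k\ge 1$, once $m$ satisfies $\alpha_m \ge k$, twisting the section by precomposition with multiplication by $c^{(\alpha_m - k)/q_0^m}\in R^{1/q_0^m}$ (an $R$-linear operation) produces a splitting of $R \to R^{1/q_0^m}$, $1 \mapsto c^{k/q_0^m}$. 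Now for $d \in R$ not in any minimal prime, strong $F$-regularity of $R_c$ supplies an $R_c$-linear splitting $\tau_c : R_c^{1/q_1} \to R_c$ with $\tau_c(d^{1/q_1}) = 1$; clearing denominators produces $\bar\tau : R^{1/q_1} \to R$ with $\bar\tau(d^{1/q_1}) = c^k$ for some $k\ge 0$. Applying $(-)^{1/q_0^m}$ to $\bar\tau$ yields $\bar\tau^{1/q_0^m}: R^{1/(q_1 q_0^m)} \to R^{1/q_0^m}$ sending $d^{1/(q_1 q_0^m)} \mapsto c^{k/q_0^m}$, and composing with the twisted splitting above gives the desired splitting of $R \to R^{1/(q_1 q_0^m)}$, $1 \mapsto d^{1/(q_1 q_0^m)}$.

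For (2), the forward direction follows by writing $\bar c \in R_P$ (not in any minimal prime of $R_P$) as a unit multiple, in $R_P$, of $d/1$ for some $d \in R$ avoiding every minimal prime of $R$ --- produced by prime avoidance after noting that for any minimal $\fq \subseteq P$ one has $c\notin \fq$, and the intersection of the remaining minimal primes is not contained in any of the minimal primes containing $c$ --- and localizing the splitting for $d$ obtained from strong $F$-regularity of $R$. For the reverse direction, fix $c \in R$ not in any minimal prime and consider $U_q := \{\fp \in \Spec R : R_\fp \to R_\fp^{1/q},\ 1 \mapsto c^{1/q}\ \text{splits}\}$. This is open in $\Spec R$ because splitting of a map to a finitely presented module is governed by an ideal of $R$ (the image of the evaluation map $\Hom_R(R^{1/q},R)\to R$ at $c^{1/q}$), and $R^{1/q}$ is finitely presented. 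By hypothesis every prime lies in some $U_q$, so quasi-compactness gives a finite subcover $U_{q_1},\ldots,U_{q_n}$; taking $Q = p^E$ with $E = \mathrm{lcm}(e_1, \ldots, e_n)$ and $q_i = p^{e_i}$, the iteration trick from (3) applied to each $q_i$ yields $U_{q_i} \subseteq U_Q$, hence $U_Q = \Spec R$. Local splitting everywhere together with the finite presentation of $R^{1/Q}$ upgrades to a global splitting, establishing strong $F$-regularity of $R$. The main obstacle is the combinatorial bookkeeping in (3) producing splittings with controlled fractional exponents of $c$ from the single given input splitting; once this is in hand, the remaining parts follow by standard local-global arguments.
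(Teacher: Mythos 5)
The paper does not actually prove this proposition: its entire ``proof'' is the one-line citation to Hochster--Huneke \cite{HoHu2}, Theorems 5.5 and 5.9. Your blind write-up is therefore necessarily ``different'' from the paper's, but in substance it is a reconstruction of the original Hochster--Huneke arguments (Kunz for (1), the radical-ideal/quasi-compactness trick for (2), and the Frobenius-root iteration with denominator-clearing for (3)), and the core of it is correct. In particular, your treatment of (3) --- iterating $\theta^{1/q_0^{k}}$, composing the Frobenius-rooted splittings, twisting by $c^{(\alpha_m-k)/q_0^m}$ to hit any exponent $k\le\alpha_m$, then clearing denominators in a splitting coming from $R_c$ and composing --- is exactly right, and your localization argument in (2, reverse) via the evaluation ideal $I_q=\im\bigl(\Hom_R(R^{1/q},R)\to R\bigr)$ and quasi-compactness of $\Spec R$ is also the standard and correct approach.

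Two spots need tightening. First, in (2, forward) the sentence ``the intersection of the remaining minimal primes is not contained in any of the minimal primes containing $c$'' is false as written if ``remaining'' means ``not contained in $P$'' (such an intersection \emph{is} contained in every minimal prime containing $c$, since those primes are among the ``remaining'' ones). What you want is: let $J=\bigcap_{\fq\not\ni c}\fq$ (intersection over all \emph{minimal primes not containing} $c$); by incomparability $J\not\subseteq\fq'$ for any minimal $\fq'\ni c$, so pick $t\in J\setminus\bigcup_{\fq'\ni c}\fq'$ and set $d=c+t$. Then $d$ avoids every minimal prime of $R$, and since every minimal prime of $R$ inside $P$ avoids $c$ one has $t\in\fq$ for all minimal $\fq\subseteq P$, hence $t/1=0$ in the reduced ring $R_P$ and $d/1=\bar c$; now localize the splitting for $d$. (Alternatively, and more in line with how Hochster--Huneke actually argue, deduce (2, forward) from (3): pick one $c$ with $R_c$ regular, localize the single splitting $R\to R^{1/q},\ 1\mapsto c^{1/q}$, and invoke (3) for $R_P$.) Second, in (1) ``reduce to the $F$-finite local case, since splitting is a local property'' glosses over that the exponent $q$ a priori depends on the prime; your local argument produces a $q$ with $c\notin\fm^{[q]}$ at each $\fm$, but you need a uniform one. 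This is not a fatal circularity, since your own (2, reverse) argument --- which depends only on (3) --- supplies the quasi-compactness step that upgrades local strong $F$-regularity to global; just say so explicitly so the logical order is (3) $\Rightarrow$ (2) $\Rightarrow$ (1, global) from (1, local).
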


\begin{proof}
All statements are found in \cite[Theorem 5.5 and Theorem 5.9]{HoHu2}.
\end{proof}

In view of the above proposition, we say that a Noetherian $\mathbb{F}_p$-scheme $X$ is \textit{strongly F-regular}, if it admits an affine open covering, each of which affine piece is $F$-finite and strongly $F$-regular.

\begin{definition}[Smith]
A projective variety $X$ over an $F$-finite field is \textit{globally F-regular} if the section ring of $X$ is strongly $F$-regular for some ample divisor $D$ on $X$.
\end{definition}

Globally $F$-regular varieties are arithmetically normal and ACM. Being globally $F$-regular is really a global property. Even if every local ring of a variety is regular, it is usually far from globally $F$-regular. Some basic properties are found in \cite{Smith}.

\begin{example}
Take $R=\mathbb{F}_p[X,Y,Z]/(X^2+Y^3+Z^5), p \ge 7$ with the grading: $\deg X=15$, $\deg Y=10$, and $\deg Z=6$. Then $R$ is strongly $F$-regular and normal Gorenstein (see \cite{HoHu1} and \cite{Hu} for similar examples). Moreover, as the direct summand of a strongly $F$-regular domain is strongly $F$-regular, the Veronese subring $R^{(d)}$ is strongly $F$-regular. This fact reflects Lemma \ref{Lemma6} below. Globally $F$-regular varieties are a sub-class of Frobenius-split varieties (see below for the definition). Recently, some connections between globally $F$-regular varieties and varieties of Fano type have been clarified \cite{GonOkSanTak} and \cite{SchSmi}.
\end{example}

Let $X$ be an $\mathbb{F}_{p}$-scheme and let $F^{e}_{X}:X \to X$ denote the $e$-th \textit{absolute Frobenius morphism} with $e > 0$, which induces a morphism $\mathscr{O}_{X} \to (F^{e}_{X})_{*}\mathscr{O}_{X}$ of $\mathscr{O}_{X}$-modules. This map is the $p^e$-th power map at stalks, while it is an identity on the underlying topological spaces. Let $D$ be an effective Cartier divisor on $X$. The natural sheaf map $\mathscr{O}_{X} \to \mathscr{O}_{X}(D)$ descends to a morphism $(F^{e}_{X})_{*}\mathscr{O}_{X} \to (F^{e}_{X})_{*}\mathscr{O}_{X}(D)$. For the following lemma, see \cite[Theorem 3.10]{Smith} and \cite[Theorem 1]{Hashi2}.

\begin{lemma}
\label{Lemma6}
Suppose that $X$ is a projective variety over an $F$-finite field $k$ of characteristic $p>0$. Then the following are equivalent:

\begin{enumerate}
\item
$X$ is globally $F$-regular;

\item
the section ring of $X$ is strongly $F$-regular for any ample Cartier divisor $D$ on $X$;

\item
there exists an ample effective Cartier divisor $D$ such that $X-D$ is strongly $F$-regular, and the composed map:
$$
\mathscr{O}_{X} \to (F^{e}_{X})_{*}\mathscr{O}_{X} \to (F^{e}_{X})_{*}\mathscr{O}_{X}(D)
$$ 
admits a splitting for some $e > 0$.
\end{enumerate}
\end{lemma}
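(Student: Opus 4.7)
My plan is to establish the chain (3) $\Rightarrow$ (1) $\Rightarrow$ (3) together with (1) $\Rightarrow$ (2), with (2) $\Rightarrow$ (1) being tautological from the definition of global $F$-regularity. The essential technical tool throughout is the translation between finitely generated graded modules over the section ring $R=R(X,D)$ and coherent sheaves on $X=\Proj R$ reviewed in Section~4 via $M=\bigoplus_{n}\Gamma(X,\mathscr{M}\otimes\mathscr{O}_X(nD))$, which exchanges sheaf-level Frobenius morphisms and their splittings with graded $R$-module maps.

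I would first prove (3) $\Rightarrow$ (1), which is the cleanest implication. Let $D$, $t$ and $e$ be as in (3) and set $R:=R(X,D)$; let $c\in R$ be the homogeneous element corresponding to the section $t$. The open subscheme $X-D$ is identified with $\Spec R_{(c)}$, and $R_c\cong R_{(c)}[c,c^{-1}]$; since $X-D$ is strongly $F$-regular and strong $F$-regularity is stable under localization and polynomial extensions over an $F$-finite base (Proposition~\ref{Proposition3}(2)), the ring $R_c$ is strongly $F$-regular. Applying $\bigoplus_{n\in\mathbb{Z}}\Gamma(X,-\otimes\mathscr{O}_X(nD))$ to the sheaf splitting in (3) produces a splitting of the graded $R$-module map $R\to R^{1/q}$, $1\mapsto c^{1/q}$, so Proposition~\ref{Proposition3}(3) concludes that $R$ itself is strongly $F$-regular, whence $X$ is globally $F$-regular.

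For (1) $\Rightarrow$ (3), starting from $R(X,E_0)$ strongly $F$-regular for some ample Cartier divisor $E_0$, the homogeneous localizations $R(X,E_0)_{(\fp)}$ at every graded prime $\fp$ not containing the irrelevant ideal are strongly $F$-regular by Proposition~\ref{Proposition3}(2), so that $X$ is locally strongly $F$-regular. I would then choose a sufficiently generic effective ample Cartier divisor $D$ (for instance, a generic member of $|nE_0|$ for large $n$) such that $X-D$ lies inside the strongly $F$-regular locus of $X$. Once we have the transfer of strong $F$-regularity from $R(X,E_0)$ to $R(X,D)$ (which is the content of the step (1) $\Rightarrow$ (2) below), Proposition~\ref{Proposition3}(3) applied to the homogeneous element $c\in R(X,D)$ corresponding to the defining section $t$ gives a splitting of $R(X,D)\to R(X,D)^{1/q}$, $1\mapsto c^{1/q}$, which sheafifies through $\widetilde{(\,\cdot\,)}$ to the splitting of $\mathscr{O}_X\to(F^e_X)_*\mathscr{O}_X(D)$ required by (3).

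Finally, (1) $\Rightarrow$ (2), which I expect to be the main obstacle, amounts to divisor independence: if $R(X,E_0)$ is strongly $F$-regular for one ample Cartier $E_0$, then $R(X,D)$ is strongly $F$-regular for every ample Cartier $D$. My approach is to bootstrap through (3): extract the sheaf-level splitting from the strong $F$-regularity of $R(X,E_0)$ (by the previous paragraph applied with some effective ample $D_0$ in place of $D$), and for the given $D$ note that $nD-D_0$ is effective for $n$ sufficiently large, so that splittings of $\mathscr{O}_X\to(F^e_X)_*\mathscr{O}_X(nD)$ can be manufactured from the splitting for $D_0$ by post-composition with the multiplication-by-section map of $nD-D_0$. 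Combined with a generic choice making $X-nD$ strongly $F$-regular, (3) $\Rightarrow$ (1) applied to $nD$ together with a Veronese argument (a Veronese subring is a direct summand, hence inherits strong $F$-regularity) passing from $R(X,nD)$ to $R(X,D)$ completes the argument. The delicate point is the bookkeeping of Frobenius exponents $q=p^e$ when chaining splittings for $E_0$, $D_0$ and $nD$; this is substantially the content of Smith's original proof (\cite{Smith}, Theorem 3.10) together with Hashimoto's reformulation (\cite{Hashi2}, Theorem 1).
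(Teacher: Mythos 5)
The paper does not supply its own proof of Lemma~\ref{Lemma6}; it refers to Smith's Theorem 3.10 and Hashimoto's Theorem 1, so there is no in-paper argument to compare against. Your reconstruction of (3)~$\Rightarrow$~(1) is essentially correct, with one small gloss: applying $\bigoplus_n\Gamma(X,-\otimes\mathscr{O}_X(nD))$ to the sheaf splitting does not literally produce a splitting of $R\to R^{1/q}$, but of $R\to M$, where $M=\bigoplus_n(R_{1+p^en})^{1/q}$ is the $R$-module direct summand of $R^{1/q}$ cut out by the fractional $\mathbb{Z}/q\mathbb{Z}$-grading; one then composes with the projection $R^{1/q}\to M$ to split $R\to R^{1/q}$, $1\mapsto c^{1/q}$. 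That is a presentational point and is easily repaired.

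There is, however, a genuine gap at the end of (1)~$\Rightarrow$~(2). You propose to conclude strong $F$-regularity of $R(X,D)$ from strong $F$-regularity of $R(X,nD)$ "by a Veronese argument (a Veronese subring is a direct summand, hence inherits strong $F$-regularity)." This runs the direct summand principle in the wrong direction: $R(X,nD)=R(X,D)^{(n)}$ is the $n$-th Veronese \emph{subring} of $R(X,D)$, and it is an $R(X,nD)$-module direct summand of $R(X,D)$. Hence strong $F$-regularity passes \emph{down} from $R(X,D)$ to $R(X,nD)$, not up. Ascent from a strongly $F$-regular Veronese subring to the ambient graded ring is not a formal consequence of the direct summand principle and would need a separate argument. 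The standard fix, and the one Smith uses, is to never leave $R:=R(X,D)$. Choose $m\gg 0$ so that $mD-D_0$ is effective and set $D':=D_0+(mD-D_0)\in|mD|$; post-composing the splitting of $\mathscr{O}_X\to(F^e_X)_*\mathscr{O}_X(D_0)$ with multiplication by a section of $mD-D_0$ splits $\mathscr{O}_X\to(F^e_X)_*\mathscr{O}_X(D')$. Applying $\bigoplus_n\Gamma(X,-\otimes\mathscr{O}_X(nD))$ now splits $R\to R^{1/q}$, $1\mapsto c^{1/q}$, where $c\in R_m$ is the homogeneous element of $R$ corresponding to $D'$. Since $X-D'\subseteq X-D_0$ is strongly $F$-regular and $\Spec R_c\to X-D'$ is essentially a $\mathbb{G}_m$-torsor, $R_c$ is strongly $F$-regular, and Proposition~\ref{Proposition3}(3) gives strong $F$-regularity of $R(X,D)$ directly. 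As a side benefit, organizing the argument this way also removes the apparent circular dependence between your (1)~$\Rightarrow$~(3) and (1)~$\Rightarrow$~(2) paragraphs.
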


In particular, if the above composed sheaf map splits for some $e>0$, then it also splits for all $e' \ge e$. A projective variety $X$ is \textit{Frobenius-split} \cite{MeRam}, if $\mathscr{O}_{X} \to (F_{X})_*\mathscr{O}_{X}$ splits as a map of $\mathscr{O}_{X}$-modules. This is equivalent to requiring that $X$ admits a section ring which is $F$-pure for some ample divisor on $X$. Clearly, globally $F$-regular varieties are Frobenius-split. Moreover, if the section ring for some ample divisor is $F$-pure, then the section ring for any ample divisor is also $F$-pure \cite[Proposition 3.1]{Smith}.

Now let $f:X \to S$ be a morphism of $\mathbb{F}_{p}$-schemes. Then there is the following natural commutative diagram:
$$
\begin{CD}
X @>F^{e}_{X/S}>> X^{(e)} @>q>> X \\
@| @VVf^eV @VVfV \\
X @>f>> S @>F^{e}_{S}>> S \\
\end{CD}
$$
in which the right square is cartesian and we define the $e$-th \textit{relative Frobenius morphism} $F^{e}_{X/S}:X \to X^{(e)}$ for $X^{(e)}:=X \times_{S,F^e_S} S$.

\begin{lemma}
\label{Lemma7}
Let $f:X \to S$ be a morphism of $\mathbb{F}_p$-schemes. Then $F^{e}_{X/S}:X \to X^{(e)}$ is a bijection on the underlying topological spaces. Moreover, we have equality of stalks: $((F^{e}_{X/S})_*\mathscr{F})_{y}=\mathscr{F}_x$ for an $\mathscr{O}_{X}$-module $\mathscr{F}$ and $y=F^{e}_{X/S}(x)$.
\end{lemma}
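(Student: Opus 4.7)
The plan is to reduce both assertions to the standard fact that the absolute Frobenius on any $\mathbb{F}_p$-scheme is a universal homeomorphism on underlying topological spaces. First, I would recall that for any $\mathbb{F}_p$-scheme $T$, the absolute Frobenius $F^{e}_{T}:T \to T$ is the identity on the underlying topological space of $T$, since on points it sends $t$ to $t$ and only the structure sheaf is modified by the $p^e$-th power map on sections. In particular $F^{e}_{S}:S \to S$ is trivially a homeomorphism; more strongly it is a universal homeomorphism, being integral, radicial and surjective, and this class of morphisms is stable under arbitrary base change.

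Next, I would use that the right square in the given diagram is cartesian to identify the projection $q:X^{(e)} \to X$ as the pullback of $F^{e}_{S}$ along $f:X \to S$. By stability of universal homeomorphisms under base change, $q$ is a homeomorphism on underlying topological spaces. Combining this with the fact that $F^{e}_{X}:X \to X$ is a homeomorphism (being the identity on the underlying space of $X$), and with the factorization $F^{e}_{X}=q \circ F^{e}_{X/S}$ read off from the diagram, I conclude that $F^{e}_{X/S}$ is a homeomorphism of underlying topological spaces; in particular, it is a bijection as claimed.

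For the stalk assertion, let $y \in X^{(e)}$ and let $x \in X$ be the unique point with $F^{e}_{X/S}(x)=y$. Since $F^{e}_{X/S}$ is a homeomorphism, the assignment $V \mapsto (F^{e}_{X/S})^{-1}(V)$ is an order-preserving bijection between open neighborhoods of $y$ in $X^{(e)}$ and open neighborhoods of $x$ in $X$. Applying this to the directed colimit defining the stalk of a direct image sheaf yields
$$
((F^{e}_{X/S})_{*}\mathscr{F})_{y}=\varinjlim_{V \ni y}\mathscr{F}\bigl((F^{e}_{X/S})^{-1}(V)\bigr)=\varinjlim_{U \ni x}\mathscr{F}(U)=\mathscr{F}_{x},
$$
as required. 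The only step that demands any substantive input is recognizing that the base change $q$ of $F^{e}_{S}$ is a homeomorphism on topological spaces; once this standard fact on the absolute Frobenius is accepted, both assertions follow by formal manipulation of the cartesian diagram and the definition of the direct image, so I expect no genuine obstacle.
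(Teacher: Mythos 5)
Your argument is correct, and since the paper dispatches this lemma with ``This is an easy exercise,'' there is no written proof in the source to diverge from; your write-up supplies exactly the standard argument one would expect. The key points are all in order: the absolute Frobenius $F^e_S$ is a universal homeomorphism (it is the identity on underlying spaces, integral, radicial, and surjective), so its base change $q:X^{(e)}\to X$ along $f$ is again a homeomorphism, and factoring $F^e_X=q\circ F^e_{X/S}$ with $F^e_X$ the identity on the underlying space forces $F^e_{X/S}$ to be a homeomorphism; the stalk computation for $(F^e_{X/S})_*\mathscr{F}$ then follows immediately from the induced bijection on open neighborhoods.
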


\begin{proof}
This is an easy exercise.
\end{proof}

To discuss the global $F$-regularity along fibers of a morphism of schemes, we need a universal version of global $F$-regularity. By abuse of notation, we write $F^{e}_{X/k}$ for $F^{e}_{X/\Spec k}$. Let $q:X^{(e)} \to X$ be the map defined as above.

\begin{definition}
Let $f:X \to S=\Spec k$ be a projective variety, where $k$ is an $F$-finite field. Then $X$ is \textit{globally F-regular of type k}, if there exist an ample effective Cartier divisor $D$ such that for $e \gg 0$, $X^{(e)}-q^*D$ is strongly $F$-regular and the map of $\mathscr{O}_{X^{(e)}}$-modules: 
$$
\mathscr{O}_{X^{(e)}} \to (F^{e}_{X/k})_*\mathscr{O}_{X} \to (F^e_{X/k})_*\mathscr{O}_{X}(D)
$$ 
admits a splitting.
\end{definition}

\begin{lemma}
\label{Lemma8}
Let $X$ be a projective variety over an $F$-finite field $k$. Then $X$ is globally $F$-regular of type $k$ if and only if the variety $X_{L}:=X \times_{\Spec k} \Spec L$ is globally $F$-regular for any finite field extension $k \to L$.
\end{lemma}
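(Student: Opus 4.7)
The approach rests on two base-change principles. First, the formation of the relative Frobenius commutes with arbitrary base change: for $k \to L$, the morphism $F^{e}_{X_{L}/L}$ is obtained by pulling back the defining data of $F^{e}_{X/k}$ along $\Spec L \to \Spec k$, and $X_{L}^{(e)} = X^{(e)} \times_{k} L$ under the natural identification. Second, since $k \to L$ is flat (being a field extension), flat base change yields $(F^{e}_{X_{L}/L})_{*}\mathscr{O}_{X_{L}}(D_{L}) \simeq (F^{e}_{X/k})_{*}\mathscr{O}_{X}(D) \otimes_{k} L$, and splittings of coherent sheaf maps are preserved by such tensoring. The $(\Leftarrow)$ direction collapses by specializing the hypothesis to $L = k$, so the real content lies in the $(\Rightarrow)$ direction together with a translation between the relative Frobenius splitting (which the ``of type $k$'' notion tracks) and the absolute Frobenius splitting appearing in Lemma \ref{Lemma6}(3).

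For $(\Rightarrow)$, I would start from a datum $(D, t, e, \sigma)$ realizing that $X$ is globally $F$-regular of type $k$, where $\sigma$ is an $\mathscr{O}_{X^{(e)}}$-linear splitting of $\mathscr{O}_{X^{(e)}} \to (F^{e}_{X/k})_{*}\mathscr{O}_{X}(D)$. Tensoring the entire splitting over $k$ with $L$ produces a splitting $\sigma_{L}$ of the corresponding map on $X_{L}^{(e)}$, the divisor $D_{L}$ remains ample effective with section $t_{L}$, and $X_{L} - D_{L} = (X - D) \times_{k} L$ remains strongly $F$-regular, because the splitting criterion of Proposition \ref{Proposition3}(3) together with locality (Proposition \ref{Proposition3}(2)) survives finite faithfully flat base change of $F$-finite rings. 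It then remains to convert the $\mathscr{O}_{X_{L}^{(e)}}$-linear splitting into an $\mathscr{O}_{X_{L}}$-linear splitting of the absolute Frobenius composition appearing in Lemma \ref{Lemma6}(3). I would achieve this by exploiting that $q_{L}: X_{L}^{(e)} \to X_{L}$ is finite flat, with $(q_{L})_{*}\mathscr{O}_{X_{L}^{(e)}}$ locally free of rank $[L : L^{p^{e}}]$ over $\mathscr{O}_{X_{L}}$, so that the unit $\mathscr{O}_{X_{L}} \hookrightarrow (q_{L})_{*}\mathscr{O}_{X_{L}^{(e)}}$ is locally $\mathscr{O}_{X_{L}}$-linearly split; composing $(q_{L})_{*}\sigma_{L}$ with such a local retraction yields the sought absolute splitting.

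For $(\Leftarrow)$, taking $L = k$ immediately yields that $X$ itself is globally $F$-regular, so Lemma \ref{Lemma6}(3) furnishes an absolute Frobenius splitting as $\mathscr{O}_{X}$-modules. I would then reverse the adjunction used above: the natural identification $\mathrm{Hom}_{\mathscr{O}_{X}}(\mathscr{O}_{X}, q_{*}(F^{e}_{X/k})_{*}\mathscr{O}_{X}(D)) \simeq \mathrm{Hom}_{\mathscr{O}_{X^{(e)}}}(\mathscr{O}_{X^{(e)}}, (F^{e}_{X/k})_{*}\mathscr{O}_{X}(D))$ coming from $q^{*}\mathscr{O}_{X} = \mathscr{O}_{X^{(e)}}$ matches the two Frobenius maps, and the local freeness of $q_{*}\mathscr{O}_{X^{(e)}}$ again permits transfer of splittings. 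The main obstacle is precisely this relative-versus-absolute translation: the formal $(q^{*}, q_{*})$ adjunction matches \emph{morphisms}, but matching \emph{splittings} is where the $F$-finiteness of $k$ enters essentially, via the fact that $\mathscr{O}_{X} \to q_{*}\mathscr{O}_{X^{(e)}}$ is locally split as $\mathscr{O}_{X}$-modules. Once this is in place, both directions reduce to the formal observation that the relative Frobenius formulation is designed precisely to be stable under extensions of the base field.
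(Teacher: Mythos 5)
Your proposal diverges from the paper's proof and has genuine gaps in both directions.

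For the forward direction, the step where you assert that $X_L - D_L = (X-D)\times_k L$ ``remains strongly $F$-regular, because the splitting criterion of Proposition~\ref{Proposition3}(3) together with locality (Proposition~\ref{Proposition3}(2)) survives finite faithfully flat base change'' is not justified. Strong $F$-regularity \emph{descends} along faithfully flat maps, but ascent is not automatic; for a purely inseparable finite extension $k \to L$ the map $R \to R\otimes_k L$ does not have geometrically regular fibers, so the usual ascent theorems do not apply directly, and the absolute Frobenius of $R\otimes_k L$ is not the base change of the absolute Frobenius of $R$ (unlike the relative Frobenius). The paper circumvents exactly this: it splits $k\to L$ through $k^{\mathrm{sep}}$, treats the separable part by \'etale base change, and for the inseparable part embeds $L$ into some $k^{1/p^e}$, passes to $X^{(e)}$, and then \emph{re-chooses} $D$ using (\cite{Smith}, Theorem 3.10) so that both $X-D$ and $X^{(e)}-q^*D$ are strongly $F$-regular, before descending along the free finite map $X^{(e)} \to X_L$. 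This re-choice of $D$ and the separable/inseparable decomposition are the substantive content that your direct tensor-and-retract argument skips.

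For the converse direction, the gap is more serious. Taking $L=k$ only yields the \emph{absolute} Frobenius splitting of $\mathscr{O}_X \to (F^e_X)_*\mathscr{O}_X(D)$ as $\mathscr{O}_X$-modules. The ``type $k$'' condition requires an $\mathscr{O}_{X^{(e)}}$-\emph{linear} retraction of $\mathscr{O}_{X^{(e)}} \to (F^e_{X/k})_*\mathscr{O}_X(D)$, which is strictly stronger when $k$ is imperfect because $\mathscr{O}_{X^{(e)}}$ is a bigger ring of scalars acting through $q$. The local $\mathscr{O}_X$-linear splitting of $\mathscr{O}_X \hookrightarrow q_*\mathscr{O}_{X^{(e)}}$ lets you compose retractions in one direction — from a relative splitting to an absolute one, exactly as you use in $(\Rightarrow)$ — but it cannot be reversed: given a retraction of a composite $A\to B\to C$ you do not get a retraction of $B\to C$. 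So the $(q^*,q_*)$-adjunction argument you gesture at does not promote an $\mathscr{O}_X$-linear retraction to an $\mathscr{O}_{X^{(e)}}$-linear one. The paper instead uses the full hypothesis over all finite extensions: it passes to the perfect closure $k^\infty$ (a filtered colimit of finite purely inseparable extensions), where the relative Frobenius coincides with the absolute one, obtains the type-$k^\infty$ splitting for free, and then descends to $k$ by faithfully flat descent of split exactness. Your reduction to $L=k$ throws away precisely the information needed to make this work.
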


\begin{proof}
Since $q:X^{(e)} \to X$ is a finite flat morphism, the sheaf map of $\mathscr{O}_X$-modules $\mathscr{O}_X \to q_*\mathscr{O}_{X^{(e)}}$ splits. Suppose first that $X$ is globally $F$-regular of type $k$ and let $k \to L$ be a finite field extension. Let $k^{\sep}$ be the separable closure of $k$ in $L$. Then $X_{k^{\sep}} \to X$ is finite \'etale. Using this fact, it is checked that $X_{k^{\sep}}$ is globally $F$-regular of type $k^{\sep}$. By field theory, $k^{\sep} \to L$ is purely inseparable. So we are reduced to the case that $k \to L$ is purely inseparable. We know that
\begin{equation}
\label{splitmap}
\mathscr{O}_{X^{(e)}} \to (F^e_{X/k})_*\mathscr{O}_{X} \to (F^e_{X/k})_*\mathscr{O}_{X}(D)
\end{equation}
splits for $e \gg 0$. Take sufficiently large $e>0$ such that $L$ is contained in $k^{\frac{1}{p^e}}$. Then we have a natural morphism $q_L:X^{(e)} \to X_L$. Pushing $\mathscr{O}_X \to q_*\mathscr{O}_{X^{(e)}}$ down via $F^e_{X/k}$, we get a map $(F^e_{X/k})_*\mathscr{O}_X \to (F^e_{X^{(e)}})_*\mathscr{O}_{X^{(e)}}$ due to $F^e_{X^{(e)}}=F^e_{X/k} \circ q$. Since this map splits, there is a map $\sigma: (F^e_{X^{(e)}})_*\mathscr{O}_{X^{(e)}} \to (F^e_{X/k})_*\mathscr{O}_X$ that splits it. Next, tensoring $\mathscr{O}_X \to q_*\mathscr{O}_{X^{(e)}}$ with $\mathscr{O}_X(D)$ and pushing it down via $F_{X/k}^e$, we get a map $(F^e_{X/k})_*\mathscr{O}_X(D) \to(F^e_{X^{(e)}})_* \mathscr{O}_{X^{(e)}}(q^*D) $ by the projection formula. There is a map $\sigma': (F^e_{X^{(e)}})_* \mathscr{O}_{X^{(e)}}(q^*D) \to  (F^e_{X/k})_*\mathscr{O}_X(D)$ that splits it and compatible with $\sigma$. Now in view of $(\ref{splitmap})$, we have a commutative diagram: 
$$
\begin{CD}
\mathscr{O}_{X^{(e)}} @>>> (F^e_{X^{(e)}})_* \mathscr{O}_{X^{(e)}} @>>> (F^e_{X^{(e)}})_* \mathscr{O}_{X^{(e)}}(q^*D) \\
@| @V\sigma VV @V\sigma'VV \\
\mathscr{O}_{X^{(e)}} @>>> (F^e_{X/k})_*\mathscr{O}_X @>>> (F^e_{X/k})_*\mathscr{O}_X(D)
\end{CD}
$$
Note that $X^{(e)}-q^*D$ is strongly $F$-regular for $e \gg 0$ by assumption. Since the bottom horizontal map splits by our assumption, so does the upper horizontal map. We conclude that $X^{(e)}$ is globally $F$-regular. Let $q_L:X^{(e)} \to X_L$ be as above. Then since $(q_L)_*\mathscr{O}_{X^{(e)}}$ is a free $\mathscr{O}_{X_L}$-module, it follows that $X_L$ is globally $F$-regular.

We prove the converse. Then $X^{(e)}$ is globally $F$-regular for any $e>0$, because $X^{(e)}=X \times \Spec k^{\frac{1}{p^e}}$. We may choose an ample effective divisor $D$ on $X$ such that $X^{(e)}-q^*D$ is strongly $F$-regular and 
\begin{equation}
\label{splitmap2}
\mathscr{O}_{X^{(e)}} \to (F^e_{X^{(e)}})_* \mathscr{O}_{X^{(e)}} \to (F^e_{X^{(e)}})_* \mathscr{O}_{X^{(e)}}(q^*D)
\end{equation}
splits for $e \gg 0$. The requisite maps for constructing the diagram have been given in the first step. Together with $(\ref{splitmap2})$, we have a commutative diagram:
$$
\begin{CD}
\mathscr{O}_{X^{(e)}} @>>> (F^e_{X^{(e)}})_* \mathscr{O}_{X^{(e)}} @>>> (F^e_{X^{(e)}})_* \mathscr{O}_{X^{(e)}}(q^*D) \\
@| @AAA @AAA \\
\mathscr{O}_{X^{(e)}} @>>> (F^e_{X/k})_*\mathscr{O}_X @>>> (F^e_{X/k})_*\mathscr{O}_X(D) \\
\end{CD}
$$
Since the top horizontal map splits by our assumption, so does the bottom horizontal map.
\end{proof}

In view of this lemma, globally $F$-regular of type $k$ is the same as the geometrically globally $F$-regular over $k$. We prove the main theorem. First, we need some discussions on the ubiquity on Gorenstein rings, which is due to Prof. K-i. Watanabe. Since he has never published this idea, we explain it in detail. The author is grateful to him.

\begin{proposition}[K-i. Watanabe]
\label{newdiscussion}
Let $X$ be a connected normal projective variety over an algebraically closed field $k$ with $\dim X \ge 2$. Suppose that $X$ is Cohen-Macaulay and satisfies $H^i(X,\mathscr{O}_X)=0$ for all $0<i<\dim X$. Then there exists an ample $\mathbb{Q}$-divisor $E$ such that the generalized section ring $R:=R(X,E)$ is Gorenstein.
\end{proposition}

\begin{proof}
First we fix notation. Let $E \in \Div(X)_{\mathbb{Q}}$. Write
$$
E=F+\sum_V \frac{p_V}{q_V} V,
$$ 
where we assume $F \in \Div(X)$, $(p_V,q_V)=1$ and $q_V>p_V \ge 1$. Let $E':=\sum_V \frac{q_V-1}{q_V} V$.

Choose an ample divisor $H$ such that the divisor $H-2K_X$ is very ample. Let $X \to \mathbb{P}^n_k$ be an embedding defined by the linear system $|H-2K_X|$. Then by Bertini's theorem (this is where we need $\dim X \ge 2$), we may find an irreducible subvariety $V \in \Div(X)$ such that $H-2K_X \sim V$ and let $E:=K_X+\frac{1}{2} V$, where $F=K_X$ and $E'=\frac{1}{2}V$. Moreover, we have $H \sim 2E$. By Serre's vanishing theorem and \cite[Corollary 5.72]{KolMo}, taking $H$ as above, sufficiently ample, we have $H^i(X,\mathscr{O}_X(nE))=0$ for all $0<i < \dim X$ and $|n| \gg 0$. Replacing $H$ again with its high power, the assumption that $H^i(X,\mathscr{O}_X)=0$ for $0<i < \dim X$ shows that $H^i(X,\mathscr{O}_X(nE))=0$ for all $0<i < \dim X$ and $n \in \mathbb{Z}$. Then $R:=R(X,E)$ is Cohen-Macaulay in view of Remark \ref{rmk4}. Moreover, since $K_X+E'+nE=(n+1)E$, we have $K_R \simeq R(1)$ by \cite[ Corollary 2.9]{Wa}. So $R$ is Gorenstein.
\end{proof}

\begin{theorem}
\label{Theorem3}
Let $X$ be a connected smooth projective variety over an algebraically closed field of characteristic $p>0$ with $\dim X \ge2$. Fix an ample effective divisor $D$ and let
\begin{equation}
\label{exact}
\mathscr{O}_X \to (F^{e}_X)_*\mathscr{O}_{X} \to (F^{e}_X)_*\mathscr{O}_{X}(D)
\end{equation}
be the composed map of $\mathscr{O}_X$-modules, together with its cokernel sheaf $\mathscr{H}_e$. Assume further that $H^{i}(X,\mathscr{O}_X)=0$ for all $0 < i < \dim X$. Consider the following conditions:
 
\begin{enumerate}
\item
$\mathscr{H}_e$ is an ACM sheaf with respect to any ample $\mathbb{Q}$-divisor for $e \gg 0$.

\item
The sequence $(\ref{exact})$ splits; that is, $X$ is globally $F$-regular.

\item
$\mathscr{H}_e$ is an ACM sheaf with respect to any ample Cartier divisor for $e \gg 0$.
\end{enumerate}
Then we have implications $(1) \Rightarrow (2) \Rightarrow (3)$.
\end{theorem}

\begin{proof}
We first establish $(2) \Rightarrow (3)$. Then from $(\ref{exact})$, the following exact sequence of $\mathscr{O}_{X}$-modules:
\begin{equation}
\label{exact1}
0 \to \mathscr{O}_X \to (F^{e}_X)_*\mathscr{O}_{X}(D) \to \mathscr{H}_e \to 0
\end{equation}
splits. Choose an ample Cartier divisor $E$. We want to show that $\mathscr{H}_e$ is ACM with respect to $E$ for $e \gg 0$. We get a split short exact sequence:
\begin{equation}
\label{exact2}
0 \to \mathscr{O}_{X}(nE) \to (F^{e}_{X})_*\mathscr{O}_{X}(D) \otimes \mathscr{O}_X(nE) \to \mathscr{H}_e\otimes\mathscr{O}_X(nE) \to 0 
\end{equation}
and the sheaves in the sequence $(\ref{exact2})$ are coherent. The sequence $(\ref{exact2})$ induces a long exact sequence:
$$
\cdots \to H^i(X,\mathscr{O}_{X}(nE)) \to H^i(X,(F^{e}_{X})_{*}\mathscr{O}_{X}(D) \otimes \mathscr{O}_X(nE)) 
$$
$$
\to H^i(X,\mathscr{H}_e\otimes \mathscr{O}_X(nE)) \to  H^{i+1}(X,\mathscr{O}_{X}(nE)) \to \cdots.
$$ 
By the splitting hypothesis, the map $H^i(X,(F^{e}_{X})_*\mathscr{O}_{X}(D) \otimes \mathscr{O}_X(nE)) 
\to H^i(X,\mathscr{H}_e \otimes \mathscr{O}_X(nE))$ is surjective. Now we show that
\begin{equation}
\label{exact3}
H^{i}(X,(F^{e}_{X})_*\mathscr{O}_{X}(D) \otimes \mathscr{O}_X(nE))=0
\end{equation}
for all $0<i<\dim X$, $n \in \mathbb{Z}$ and $e \gg 0$. The pull-back under $F^e_X$ gives $(F^e_X)^*\mathscr{O}_X(nE)=\mathscr{O}_X(p^enE)$ and we have
$$
H^i(X,\mathscr{O}_{X}(D+p^enE)) \simeq H^i(X,(F^{e}_{X})_*\mathscr{O}_{X}(D+p^enE)) \simeq  H^i(X,(F^{e}_{X})_*\mathscr{O}_{X}(D) \otimes \mathscr{O}_X(nE)) 
$$ 
by \cite[Chapter III, Exercise 8.2]{Har}, together with the projection formula. Now we claim that
$$
H^i(X,\mathscr{O}_{X}(D+p^enE))=0
$$ 
for all $0 < i <\dim X$, $n \in \mathbb{Z}$ and $e \gg 0$. Indeed if $n > 0$, then Serre's vanishing theorem gives the result. If $n=0$, this follows from \cite[Theorem 4.10]{Smith}. Finally if $n<0$, then since $\mathscr{O}_X(-p^enE)$ is ample, the divisor $\mathscr{O}_X(-D-p^enE)$ is also ample for $e \gg 0$. So applying \cite[Corollary 4.4]{Smith}, we have $H^i(X,\mathscr{O}_X(D+p^enE))=0$ for all $i<\dim X$, which yields the vanishing $(\ref{exact3})$. Therefore, $H^i(X,\mathscr{H}_e \otimes \mathscr{O}_X(nE))=0$ for all $0 < i < \dim X$, $n \in \mathbb{Z}$ and $e \gg 0$. On the other hand, localizing the split exact sequence $(\ref{exact1})$ at all points of $\Supp \mathscr{H}_e$ and applying Lemma \ref{Lemma7}, the splitting of the sequence implies that $\mathscr{H}_e$ is a CM sheaf. Hence $\mathscr{H}_e$ is ACM with respect to $E$ in view of Lemma \ref{Lemma4}.

We next establish $(1) \Rightarrow (2)$. It suffices to show that the sequence $(\ref{exact})$ splits. We will use the local duality for graded rings. By Proposition \ref{newdiscussion} together with the assumption that $H^i(X,\mathscr{O}_X)=0$ for all $0 < i < \dim X$ and $\dim X\ge2$, there exists an ample $\mathbb{Q}$-divisor $E \in \Div(X)_{\mathbb{Q}}$ such that the generalized section ring
$$
R:=R(X,E)=\bigoplus_{n \in \mathbb{Z}} H^{0}(X,\mathscr{O}_{X}(nE))
$$
is Gorenstein and $X \simeq \Proj(R)$. Let us put
$$
M:=\bigoplus_{n \in \mathbb{Z}} H^{0}(X,(F^{e}_{X})_{*}\mathscr{O}_{X}(D) \otimes \mathscr{O}_{X}(nE)).
$$ 
Then to show that $\mathscr{O}_{X} \to (F^{e}_{X})_{*}\mathscr{O}_{X}(D)$ splits, it suffices to show that the injective map of graded $R$-modules: $R \to M$ splits. Note that the sequence $(\ref{exact2})$ stays exact, since $\mathscr{O}_X(nE)$ is an invertible sheaf.

We have $\bigoplus_{n \in \mathbb{Z}} H^{1}(X,\mathscr{O}_{X}(nE))=0$ as seen in the proof of Proposition \ref{newdiscussion}. Thus the cokernel of the map $R \to M$ is isomorphic to $N:=\bigoplus_{n \in \mathbb{Z}} H^{0}(X,\mathscr{H}_e \otimes \mathscr{O}_X(nE))$. Then we have a short exact sequence of graded $R$-modules:
\begin{equation}
\label{exact4}
0 \to R \to M \to N \to 0
\end{equation}
and $N$ is a graded MCM module over $R$ by our hypothesis. According to Lemma \ref{Lemma5}, the map $(\ref{exact4})$ splits if and only if the induced map $E_R \to M \otimes_R E_R$ is injective, where $E_R$ is the injective hull of the residue field of $R$.

By construction, $R$ is Gorenstein and $K_R$ is free over $R$. Then the induced sequence
$$
0 \to K_R \to M \otimes_R K_R \to N \otimes_R K_R \to 0
$$
is short exact. Let $d=\dim R$ and let $\fm$ be the graded maximal ideal of $R$. We have a long exact sequence:
$$
\cdots \to H^{d-1}_{\fm}(N \otimes_R K_R) \to H^{d}_{\fm}(K_R) \to H^d_{\fm}(M \otimes_R K_R) \to \cdots
$$
Then it follows that $H^{d-1}_{\fm}(N \otimes_R K_R)=0$ by the depth criterion for MCM modules, together with the fact that $K_R$ is a free $R$-module.

Now we can compute the injective map $H^d_{\fm}(K_R) \to H^d_{\fm}(M \otimes_R K_R)$. Since these are top local cohomology modules, we see that it is equal to $E_R \to M \otimes_R E_R$, due to the fact $H^d_{\fm}(K_R)\simeq E_R$ (the isomorphism $M \otimes_R H^d_{\fm}(K_R) \simeq H^{d}_{\fm}(M \otimes_R K_R)$ follows from the direct computation for top local cohomology). Hence it is injective. So the sequence $(\ref{exact4})$ splits and $X$ is globally $F$-regular, as desired.
\end{proof}

The author does not know if $(3)$ implies $(1)$. The reason for taking $\mathbb{Q}$-divisors is that it is necessary to find a Gorenstein ring as a generalized section ring. The following corollary gives a criterion for global $F$-regularity in terms of the generalized section ring.

\begin{corollary}[Criterion for global $F$-regularity]
\label{Finalcorollary}
Let $X$ be a connected normal projective variety over an $F$-finite field with $\dim X \ge 2$. Suppose that the following condition holds:
\begin{enumerate}
\item[$\bullet$]
There exists a generalized section ring $R=R(X,E)$ that is Gorenstein such that there is an injective $R$-module map $R \to R^{\frac{1}{q}};1 \mapsto c^{\frac{1}{q}}$, $R_c$ is strongly $F$-regular, where $c\in R$ is in no minimal prime of $R$, and its cokernel is a MCM module for some $q=p^e$.
\end{enumerate}
Then $X$ is globally $F$-regular.
\end{corollary}

\begin{proof}
We need to show that the section ring of $X$ with respect to an ample divisor is strongly $F$-regular. Let $N$ be the cokernel module of $R \to R^{\frac{1}{q}}$. Since $R$ is Gorenstein, the canonical module $K_R$ is free. Then applying the local cohomology functor to the short exact sequence
$$
0 \to K_R \to R^{\frac{1}{q}} \otimes_R K_R \to N \otimes_R K_R \to 0,
$$
we see that $R \to R^{\frac{1}{q}}$ splits by using the fact the $R$-module $N$ is MCM and thus, $R=R(X,E)$ is strongly $F$-regular. Let $m>0$ be such that $mE$ is an ample Cartier divisor. By taking the $m$-th Veronese subring of $R$, it follows that the section ring $R^{(m)}:=R(X,mE)$ is strongly $F$-regular. Hence $X$ is globally $F$-regular.
\end{proof}

\begin{remark}
It is interesting to try to extend our results to the pair $(X,D)$, which is important for applications. Some relevant ideas may be found in \cite{SchSmi}. It is also interesting to consider the splitting problem of $\mathscr{O}_{X^{(e)}} \to (F^e_{X/S})_*\mathscr{O}_X \to (F^e_{X/S})_*\mathscr{O}_X(D)$. The splitting problem of $\mathscr{O}_{X^{(e)}} \to (F^{e}_{X/S})_*\mathscr{O}_{X}$ is addressed in \cite{Hashi3} when $X$ and $S$ are affine.
\end{remark}

Let $f:X \to S$ be a morphism of $\mathbb{F}_p$-schemes with $s \in S$. Write $\mathscr{M}_s$ (resp. $D_s$) for the restriction $\mathscr{M}|_{X_s}$ (resp. $D|_{X_s}$). Let $(X_s)^{(e)}=X_s \times_{\Spec k(s)} \Spec k(s)^{\frac{1}{p^e}}$ and let $(X^{(e)})_s$ denote the fiber of $f^e:X^{(e)} \to S$ over $s$. Then we have $(X_s)^{(e)} \simeq (X^{(e)})_s$ canonically, which we simply write as $X^{(e)}_s$. Finally, we prove the following corollary. A similar result for Frobenius-split varieties is also known \cite{KiRa}.

\begin{corollary}[Semicontinuity of global $F$-regularity]
Let $f:X \to S$ be a projective smooth morphism of schemes of finite type over an $F$-finite field and let 
$$
\mathbf{P}=\mbox{globally F-regular of type}~k(s)
$$
for $s \in S$. Then $U_{f}(\mathbf{P})$ is Zariski open in $S$. 
\end{corollary}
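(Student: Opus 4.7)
The plan is to verify openness of $U_f(\mathbf{P})$ by using the cohomological characterization of global $F$-regularity from Corollary \ref{Finalcorollary}, combined with the semicontinuity of ACM sheaves (Theorem \ref{Theorem2}) as the main semicontinuity input. After shrinking $S$ to an affine open and fixing an $f$-ample line bundle $\mathscr{L}$ on $X$, pick $s_0 \in U_f(\mathbf{P})$. By Lemma \ref{Lemma8} the geometric fiber $\overline{X}_{s_0}$ is globally $F$-regular; replacing $\mathscr{L}$ by a sufficiently high power and passing (if necessary) to a finite extension of $k(s_0)$ over which the data is defined, Corollary \ref{Finalcorollary} produces $e_0>0$ and an ample effective divisor $D_0 \sim \mathscr{L}|_{X_{s_0}}$ such that $X_{s_0}-D_0$ is strongly $F$-regular, $H^i(X_{s_0},\mathscr{O}_{X_{s_0}})=0$ for $0<i<\dim X_{s_0}$, and the cokernel $\mathscr{H}_{e,s_0}$ of $\mathscr{O}_{X_{s_0}} \to (F^e_{X_{s_0}})_*\mathscr{O}_{X_{s_0}}(D_0)$ is ACM with respect to any ample $\mathbb{Q}$-divisor for every $e \ge e_0$.

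Next I would spread out $D_0$ to a relative Cartier divisor. By cohomology-and-base-change, $f_*\mathscr{L}$ is locally free near $s_0$, so the defining section of $D_0$ lifts to a section $t \in \Gamma(f^{-1}(V),\mathscr{L})$ over a neighborhood $V$ of $s_0$, whose zero locus is a relative Cartier divisor $\mathscr{D}$ with $\mathscr{D}_{s_0}=D_0$. The cokernel sheaf $\mathscr{H}_e$ of the composed map $\mathscr{O}_X \to (F^e_{X/V})_*\mathscr{O}_X(\mathscr{D})$ is then coherent on $f^{-1}(V)$ and $V$-flat on a neighborhood of $s_0$, since the defining map is the composition of the (injective) relative Frobenius with multiplication by $t$, which is injective on each integral fiber, and flatness of the cokernel follows from the local flatness criterion.

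With this global data in place, I would verify openness of each of the three conditions of Corollary \ref{Finalcorollary} at $s_0$ separately. The cohomological vanishing $H^i(X_t,\mathscr{O}_{X_t})=0$ for $0<i<\dim X_t$ is open by upper-semicontinuity of cohomology combined with constancy of Euler characteristic on connected flat families. The strong $F$-regularity of $X_t - \mathscr{D}_t$ is open, since the strongly $F$-regular locus of the excellent $F$-finite scheme $X - \mathscr{D}$ is Zariski open by Proposition \ref{Proposition3}(2), and its closed complement has closed image in $S$ by properness of $f$ on the complement. The fiberwise ACM condition on $\mathscr{H}_e$ with respect to the $f$-ample $\mathbb{Q}$-divisor associated to $\mathscr{L}$ is open by Theorem \ref{Theorem2}, after applying Proposition \ref{Proposition2} to guarantee fiberwise Cohen-Macaulayness on a neighborhood of $s_0$. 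Intersecting these three open sets gives an open neighborhood of $s_0$ contained in $U_f(\mathbf{P})$ by the converse direction of Corollary \ref{Finalcorollary}.

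The main obstacle is reconciling Corollary \ref{Finalcorollary}, which is stated for smooth projective varieties of dimension $\ge 2$, with the absence of a smoothness hypothesis on the fibers of $f$; this is handled by observing that globally $F$-regular varieties are automatically normal and Cohen-Macaulay, so normality and Cohen-Macaulayness of fibers propagate to a neighborhood of $s_0$ by Proposition \ref{Proposition2} together with the openness of normality in flat families, and the Gorenstein generalized-section-ring construction in the proof of Theorem \ref{Theorem3} still applies. A secondary subtlety is the descent of $D_0$ from the geometric fiber to the actual family, which is dealt with by performing the above argument on a generically finite cover of a neighborhood of $s_0$ and using the fact that openness is preserved under such covers by Chevalley's theorem.
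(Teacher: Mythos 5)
Your plan is to reduce the corollary to an openness check on the three conditions of Corollary~\ref{Finalcorollary}, using Theorem~\ref{Theorem2} for the ACM condition. This is a different route from the paper's, and unfortunately it runs into obstructions that the paper's own proof is specifically structured to avoid.

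The central problem is that Corollary~\ref{Finalcorollary} (equivalently Theorem~\ref{Theorem3}) is stated and proved for \emph{smooth} projective varieties, and the smoothness is not decorative: in the proof of Theorem~\ref{Theorem3}, the ample $\mathbb{Q}$-divisor $E$ coming from the Gorenstein generalized section ring is a genuine Weil $\mathbb{Q}$-divisor, and the argument uses that $\mathscr{O}_X(E)$ is an invertible sheaf (because $X$ is smooth) in order to compute $(F^e_X)^*\mathscr{O}_X(nE)=\mathscr{O}_X(p^enE)$ and apply the projection formula. For a merely normal, Cohen--Macaulay fiber this fails, so your remark that ``the Gorenstein generalized-section-ring construction still applies'' does not rescue the argument: the construction of $R(X,E)$ as a Gorenstein ring is fine for normal CM $X$, but the cohomological bookkeeping in Theorem~\ref{Theorem3} breaks down. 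In the corollary you are asked to prove, $f$ is not assumed smooth (indeed the paper explicitly remarks that smoothness of $f$ is unnecessary), so you cannot invoke Corollary~\ref{Finalcorollary} on the fibers $X_t$ as a black box. The paper circumvents this by arranging only that the \emph{open} part $X_s-D_s$ of the fiber be smooth (via Bertini and then Proposition~\ref{Smooth} applied to $X-D\to S$), and then working directly with the relative Frobenius $F^e_{X/S}$, the relative dualizing sheaf $\omega_{f^e}$, and Mumford's cohomology-and-base-change criterion to propagate the injectivity of the map of top local cohomologies $H^d_{\fm}(K_R)\to H^d_{\fm}(M\otimes K_R)$ from the fiber at $s$ to nearby fibers; it never needs $X_y$ to be smooth.

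There is a second, more localized, gap: you assert that the strong $F$-regularity of $X_t-\mathscr{D}_t$ is open in $t$ because ``the strongly $F$-regular locus of $X-\mathscr{D}$ is open and its closed complement has closed image in $S$ by properness of $f$ on the complement.'' But $X-\mathscr{D}\to S$ is not proper, so the image of a closed subset need not be closed; Chevalley only gives constructibility. The paper's argument instead makes the complement \emph{smooth} over a neighborhood of $s$ (using the carefully chosen $D_s$ and Proposition~\ref{Smooth}), which is much stronger than strong $F$-regularity and is where the openness actually comes from. Finally, a smaller issue: condition (3) of Corollary~\ref{Finalcorollary} requires ACM-ness of $\mathscr{H}_e$ with respect to \emph{every} ample $\mathbb{Q}$-divisor, whereas Theorem~\ref{Theorem2} only gives you openness of the ACM condition for the one fixed $f$-ample divisor, so even if the smoothness problem could be patched, you would need an extra argument to upgrade from ``ACM with respect to $\mathscr{L}$'' to the hypothesis of Corollary~\ref{Finalcorollary}(3), or else redo the converse direction of Theorem~\ref{Theorem3} with a single well-chosen divisor as the paper implicitly does.
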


\begin{proof}
Assume that $X_{s}$ is globally $F$-regular of type $k(s)$ for some point $s \in S$ (the residue field $k(s)$ is $F$-finite). For an effective $f$-ample Cartier divisor $D$, consider the composed map of $\mathscr{O}_{X^{(e)}}$-modules:
$$
\mathscr{O}_{X^{(e)}} \to (F^{e}_{X/S})_{*}\mathscr{O}_{X} \to (F^{e}_{X/S})_{*}\mathscr{O}_{X}(D).
$$
After shrinking $S$ to a smaller open neighborhood, we may assume that all the fibers of $f:X \to S$ are equi-dimensional and the dimension of every fiber is equal to $d-1$. Grothendieck's theory of duality tells us that the relative dualizing complex $f^{!}\mathscr{O}_S$ has only one nonzero homology and write it as $\omega_{f}$, which is locally free due to the smoothness of $f$. Since $f^e:X^{(e)} \to S$ is smooth, $\omega_{f^e}$ is also locally free.

Let us interpret everything in terms of sheaf cohomology. Note that $p_{y}:X^{(e)}_y \to X_y$ is finite flat and the sheaf $p^{*}_{y}\mathscr{O}_{X_y}(nH_y) \simeq \mathscr{O}_{X^{(e)}_y}(nH_y^{(e)})$ is ample for any $y \in S$ and an arbitrary fixed $f$-ample effective divisor $H$ on $X$. Let us consider the map:
$$
R:=\bigoplus_{n \in \mathbb{Z}}H^0\big(X^{(e)}_s,\mathscr{O}_{X^{(e)}_s}(nH_s^{(e)})\big) \to M:=\bigoplus_{n \in \mathbb{Z}}H^0\big(X^{(e)}_s,(F^e_{X_s/k(s)})_{*}\mathscr{O}_{X_s}(D_s)\otimes \mathscr{O}_{X^{(e)}_s}(nH_s^{(e)})\big).
$$
Since $X_s$ is globally $F$-regular of type $k(s)$ by assumption, the $R$-module map $R \to M$ splits. Then as discussed in the proof of Theorem \ref{Theorem3}, we find that the following natural map:
\begin{equation}
\label{injective}
H^{d}_{\fm}(K_{R}) \simeq \bigoplus_{n \in \mathbb{Z}} H^{d-1}\big(X^{(e)}_s,\omega_{X^{(e)}_s} \otimes p^*_s\mathscr{O}_{X_s}(nH_s)\big) 
\end{equation}
$$
\to H^{d}_{\fm}(M \otimes_R K_{R}) \simeq \bigoplus_{n \in \mathbb{Z}} H^{d-1}\big(X^{(e)}_s,(F^{e}_{X_s/k(s)})_{*}\mathscr{O}_{X_s}(D_s) \otimes \omega_{X^{(e)}_s} \otimes p^*_s\mathscr{O}_{X_s}(nH_s)\big)
$$
is injective for the dualizing sheaf $\omega_{X^{(e)}_s}$ of $X^{(e)}_s$, where we use $\widetilde{K_R} \simeq \omega_{X^{(e)}_s}$. Since the fiber dimension of both $f$ and $f^e$ is constant and equal to $d-1$, it follows from Grothendieck's vanishing theorem \cite[Chapter III, Theorem 2.7]{Har} that
\begin{equation}
\label{fiber1}
H^d\big(X^{(e)}_y,\omega_{X^{(e)}_y} \otimes p^*_y\mathscr{O}_{X_y}(nH_y)\big)=0
\end{equation}
and
\begin{equation}
\label{fiber2}
H^d\big(X^{(e)}_y,(F^{e}_{X_y/k(y)})_{*}\mathscr{O}_{X_y}(D_y) \otimes \omega_{X^{(e)}_y} \otimes p^*_y\mathscr{O}_{X_y}(nH_y)\big)=0
\end{equation}
for all $y \in S$. 

Let $p:X^{(e)} \to X$ be the natural projection. Then $\omega_{f^e} \otimes p^*\mathscr{O}_{X}(nH)$ is coherent and flat over $S$. By $(\ref{fiber1})$, we can apply \cite[Corollary 2 (i), P. 50]{Mum} to yield an isomorphism:
\begin{equation}
\label{fiber3}
\mathcal{R}^{d-1}f^e_{*}\big(\omega_{f^e} \otimes p^*\mathscr{O}_{X}(nH)\big) \otimes k(y) \simeq H^{d-1}\big(X^{(e)}_y,\omega_{X^{(e)}_y} \otimes p^*_y\mathscr{O}_{X_y}(nH_y)\big).
\end{equation}
On the other hand, $(F^{e}_{X/S})$ is a flat morphism by the smoothness of $f$, so the sheaf $(F^{e}_{X/S})_{*}\mathscr{O}_{X}(D) \otimes \omega_{f^e} \otimes p^*\mathscr{O}_{X}(nH)$ is coherent and flat over $S$. Then by $(\ref{fiber2})$, we can apply \cite[Corollary 2 (i), P. 50]{Mum} to yield an isomorphism:
\begin{equation}
\label{fiber4}
\mathcal{R}^{d-1}f^e_{*}\big((F^{e}_{X/S})_{*}\mathscr{O}_{X}(D) \otimes \omega_{f^e} \otimes p^*\mathscr{O}_{X}(nH)\big)\otimes k(y)
\end{equation}
$$
\simeq H^{d-1}\big(X^{(e)}_y,(F^{e}_{X_y/k(y)})_{*}\mathscr{O}_{X_y}(D_y) \otimes \omega_{X^{(e)}_y} \otimes p^*_y\mathscr{O}_{X_y}(nH_y)\big)
$$
for $y \in S$. We note that $p^*\mathscr{O}_{X}(nH) \simeq \mathscr{O}_{X^{(e)}}(nH^{(e)})$ is $f^e$-ample. Let 
$$
\bigoplus_{n \in \mathbb{Z}} \mathcal{R}^{d-1}f^e_{*}\big(\omega_{f^e} \otimes p^*\mathscr{O}_{X}(nH)\big) \xrightarrow{\Phi} \bigoplus_{n \in \mathbb{Z}} \mathcal{R}^{d-1}f^e_{*}\big((F^{e}_{X/S})_{*}\mathscr{O}_{X}(D) \otimes \omega_{f^e} \otimes p^*\mathscr{O}_{X}(nH)\big)
$$
be the natural sheaf map. Then by $(\ref{fiber3})$ and $(\ref{fiber4})$, since the map $(\ref{injective})$ is injective, the induced map $\Phi \otimes k(s)$ is injective. This means that there exists an open neighborhood $s \in U \subseteq S$ such that $\Phi \otimes k(y)$ is injective for $y \in U$ by Nakayama's lemma. Again by $(\ref{fiber3})$ and $(\ref{fiber4})$, we find that
$$
\bigoplus_{n \in \mathbb{Z}} H^{d-1}\big(X^{(e)}_y,\omega_{X^{(e)}_y} \otimes p^*_y\mathscr{O}_{X_y}(nH_y)\big) 
$$
$$
\to \bigoplus_{n \in \mathbb{Z}} H^{d-1}\big(X^{(e)}_y,(F^{e}_{X_y/k(y)})_{*}\mathscr{O}_{X_y}(D_y) \otimes \omega_{X^{(e)}_y} \otimes p^*_y\mathscr{O}_{X_y}(nH_y)\big)
$$
is injective. Thus, the fiber $X_y$ is globally $F$-regular of type $k(y)$ by Theorem \ref{Theorem3}. This completes the proof of the corollary.
\end{proof}

\begin{acknowledgement}
The author thanks the anonymous referee for his or her meticulous reading of the manuscript and pointing out errors.
\end{acknowledgement}

\end{document}